\newtheorem{theorem}{Theorem}[section]
\newtheorem{lemma}[theorem]{Lemma}
\newtheorem{definition}[theorem]{Definition}
\def \endprf{\hfill {\vrule height6pt width6pt depth0pt}\medskip}
\newenvironment{proof}{\noindent {\bf Proof} }{\endprf\par}
\newcommand{\eat}[1]{}
\newcommand{\name}{\textsc{Hottopixx}\xspace}
\newcommand{\R}{\mathbb{R}}
\newcommand{\vct}[1]{\bm{#1}}
\newcommand{\mtx}[1]{\bm{#1}}
\newcommand{\diag}{\operatorname{diag}}
\newcommand{\trace}{\operatorname{Tr}}
\newcommand{\norm}[1]{\big\lVert{#1}\big\rVert}
\newcommand{\ionorm}[1]{\norm{#1}_{\infty,1}}
\newcommand{\minimize}{\operatorname*{minimize}}
\newcommand{\st}{\mbox{subject to}}
\newcommand{\eq}[1]{(\ref{eq:#1})}
\title{Factoring nonnegative matrices with linear programs}
\author{
Victor Bittorf$^*$, Benjamin Recht$^*$, Christopher R\'{e}$^*$, and Joel A.~Tropp$^\dagger$\\
$^*$ Computer Sciences Department, University of Wisconsin-Madison\\
$^\dagger$ Computational and Mathematical Sciences, California Institute of Technology}
\date{June 2012.  Last Revised Feb.~2013.}
\begin{document}

\maketitle

\vspace{-0.3in}

\begin{abstract}
This paper describes a new approach, based on linear programming, for computing nonnegative matrix factorizations (NMFs).  The key idea is a data-driven model for the factorization where the most salient features in the data are used to express the remaining features.  More precisely, given a data matrix $\mtx{X}$, the algorithm identifies a matrix $\mtx{C}$ that satisfies $\mtx{X} \approx \mtx{CX}$ and some linear constraints.  The constraints are chosen to ensure that the matrix $\mtx{C}$ selects features; these features can then be used to find a low-rank NMF of $\mtx{X}$.  A theoretical analysis demonstrates that this approach has guarantees similar to those of the recent NMF algorithm of Arora et al.~(2012).  In contrast with this earlier work, the proposed method extends to more general noise models and leads to efficient, scalable algorithms.  Experiments with synthetic and real datasets provide evidence that the new approach is also superior in practice.  An optimized C++ implementation can factor a multigigabyte matrix in a matter of minutes.
\end{abstract}

{\bf Keywords.}  Nonnegative Matrix Factorization, Linear Programming, Stochastic gradient descent, Machine learning, Parallel computing, Multicore.

\section{Introduction}

Nonnegative matrix factorization (NMF) is a popular approach
for selecting features in data~\cite{LeeSeungNMFNature,LeeNMFAlgorithms,Hofmann99,Smaragdis03}. \eat{\bf [xxx add cites! xxx]}
Many machine-learning and data-mining software packages
(including Matlab~\cite{MatlabNMFurl}, R~\cite{Gaujoux10}, and Oracle Data Mining~\cite{OracleDataMiningNMFurl})
now include heuristic computational methods for NMF.
Nevertheless, we still have limited theoretical understanding
of when these heuristics are correct.

The difficulty in developing rigorous methods for NMF
stems from the fact that the problem is computationally
challenging.  Indeed, Vavasis has shown that NMF is \textsf{NP}-Hard~\cite{Vavasis07};
see~\cite{Arora12} for further worst-case hardness results.
As a consequence, we must instate additional assumptions on
the data if we hope to compute nonnegative matrix
factorizations in practice.

In this spirit,  Arora, Ge, Kannan, and Moitra (AGKM) 
have exhibited a polynomial-time algorithm for NMF that is 
provably correct---provided that the data is drawn from an appropriate
model, based on ideas from~\cite{DonohoStodden03}.
The AGKM result describes one circumstance where we
can be sure that NMF algorithms are capable of producing meaningful
answers.  This work has the potential to make an
impact in machine learning because proper feature selection
is an important preprocessing step for many other techniques.
Even so, the actual impact is damped by
the fact that the AGKM algorithm is too
computationally expensive for large-scale problems and is not tolerant to departures from
the modeling assumptions.   Thus, for NMF, there remains a gap
between the theoretical exercise and the actual practice
of machine learning.

The present work presents a scalable, robust algorithm
that can successfully solve the NMF problem under appropriate
hypotheses.  Our first contribution is a new formulation
of the nonnegative feature selection problem that only requires
the solution of a single linear program.  Second, we provide a theoretical analysis of this algorithm.
This argument shows that our method
succeeds under the same modeling assumptions as the AGKM algorithm with an additional~\emph{margin constraint} that is common in machine learning.   We prove that if there exists a unique, well-defined model, then we can recover this model accurately; our error bound improves substantially on the error bound for the AGKM algorithm in the high SNR regime.  One may argue that NMF only ``makes sense'' (i.e., is well posed) when a unique solution exists, and so we believe our result has independent interest.  Furthermore, our algorithm can be adapted for a wide class of noise models.

In addition to these theoretical contributions, our work also includes
a major algorithmic and experimental component.  Our formulation of NMF
allows us to exploit methods from
operations research and database systems to design solvers
that scale to extremely large datasets.  We develop an
efficient stochastic gradient descent (SGD) algorithm that is (at least)
two orders of magnitude faster than the approach of AGKM when both are implemented in Matlab.
We describe a parallel implementation of our SGD algorithm that
can robustly factor matrices with $10^5$ features
and $10^6$ examples in a few minutes on a multicore workstation.

Our formulation of NMF uses a data-driven modeling approach to
simplify the factorization problem.  More precisely, we search
for a small collection of rows from the data matrix that can be
used to express the other rows.  This type of approach appears
in a number of other factorization problems, including
rank-revealing QR~\cite{Gu96}, interpolative
decomposition~\cite{MahoneyDrineas09}, subspace
clustering~\cite{Elhamifar09,Soltanolkotabi11},
dictionary learning~\cite{Esser12}, and others.
Our computational techniques can be adapted to address
large-scale instances of these problems as well.

\section{Separable Nonnegative Matrix Factorizations and Hott Topics}

{\bf Notation.}  For a matrix $\mtx{M}$ and indices $i$ and $j$, we write $\mtx{M}_{i \cdot}$ for the $i$th row of $\mtx{M}$ and $\mtx{M}_{\cdot j}$ for the $j$th column of $\mtx{M}$.  We write $M_{ij}$ for the $(i, j)$ entry.

Let $\mtx{Y}$ be a nonnegative $f\times n$ data matrix with columns indexing examples and rows indexing features.  Exact NMF seeks a factorization $\mtx{Y} = \mtx{F}\mtx{W}$ where the feature matrix $\mtx{F}$ is $f\times r$, where the weight matrix $\mtx{W}$ is $r\times n$, and both factors are nonnegative.  Typically, $r \ll \min\{f,n\}$.

Unless stated otherwise, we assume that each row of the data matrix $\mtx{Y}$ is normalized so it sums to one.  Under this hypothesis, we may also assume that each row of $\mtx{F}$ and of $\mtx{W}$ also sums to one~\cite{Arora12}.

It is notoriously difficult to solve the NMF problem. 
Vavasis showed that it is {\sf NP}-complete to decide whether a matrix
admits a rank-$r$ nonnegative factorization~\cite{Vavasis07}.
AGKM proved that an exact NMF algorithm can be used to solve
{\sf 3-SAT} in subexponential time~\cite{Arora12}.

The literature contains some mathematical analysis of NMF that can
be used to motivate algorithmic development.
Thomas~\cite{Thomas74} developed a necessary and
sufficient condition for the existence of a rank-$r$ NMF.
More recently, Donoho and Stodden~\cite{DonohoStodden03}
obtained a related sufficient condition for uniqueness.
AGKM exhibited an algorithm that
can produce a nonnegative matrix factorization under
a weaker sufficient condition.  To state their results, we need a definition.

\begin{definition}
A set of vectors $\{ \vct{v}_1, \ldots, \vct{v}_r \} \subset \R^d$ is \emph{simplicial} if no vector $\vct{v}_i$ lies in the convex hull of $\{ \vct{v}_j : j \neq i \}$. The set of vectors is \emph{$\alpha$-robust simplicial} if, for each $i$, the $\ell_1$ distance from $\vct{v}_i$ to the convex hull of $\{ \vct{v}_j : j \neq i \}$ is at least $\alpha$.
Figure~\ref{fig:robust-simplicial} illustrates these concepts.
\end{definition}

\begin{figure}
\begin{minipage}{.48\textwidth}
  \nointerlineskip \vspace{-3\baselineskip}%
  \hspace{\fill}\rule{\linewidth}{.7pt}\hspace{\fill}%
  \par\nointerlineskip 
\captionof{algorithm} {AGKM: Approximably Separable Nonnegative Matrix Factorization~\cite{Arora12}}\label{alg:arora}
  \nointerlineskip \vspace{-.5\baselineskip}%
  \hspace{\fill}\rule{\linewidth}{.7pt}\hspace{\fill}%
  \par\nointerlineskip 
\begin{algorithmic}[1]
  \STATE Initialize $R = \emptyset$.
	\STATE Compute the $f\times f$ matrix $\mtx{D}$ with $D_{ij}= \| \mtx{X}_{i\cdot}-\mtx{X}_{j\cdot}\|_1$.
	\FOR{$k=1,\ldots f$}
		\STATE  Find the set $\mathcal{N}_k$ of rows that are at least $5\epsilon/\alpha + 2 \epsilon$ away from $\mtx{X}_{k\cdot}$.
		\STATE Compute the distance $\delta_k$ of $\mtx{X}_{k\cdot}$ from $\operatorname{conv}( \{\mtx{X}_{j\cdot}~:~j\in\mathcal{N}_k\})$.
		\STATE {\bf if} $\delta_k > 2 \epsilon$, add $k$ to the set $R$.
	\ENDFOR 
\STATE Cluster the rows in $R$ as follows: $j$ and $k$ are in the same cluster if $D_{jk} \leq 10 \epsilon/\alpha+ 6 \epsilon$.
\STATE Choose one element from each cluster to yield $\mtx{W}$.
\STATE $\mtx{F} = \arg\min_{\mtx{Z}\in \R^{f\times r}} \ionorm{\mtx{X} - \mtx{Z} \mtx{W}}$
\end{algorithmic}
  \nointerlineskip \vspace{-0.125\baselineskip}%
  \hspace{\fill}\rule{\linewidth}{.7pt}\hspace{\fill}%
  \par\nointerlineskip 
    \end{minipage}
\hspace{0.02\textwidth}
\begin{minipage}[t]{.48\textwidth}
  \centering
  \includegraphics[width=2.5in]{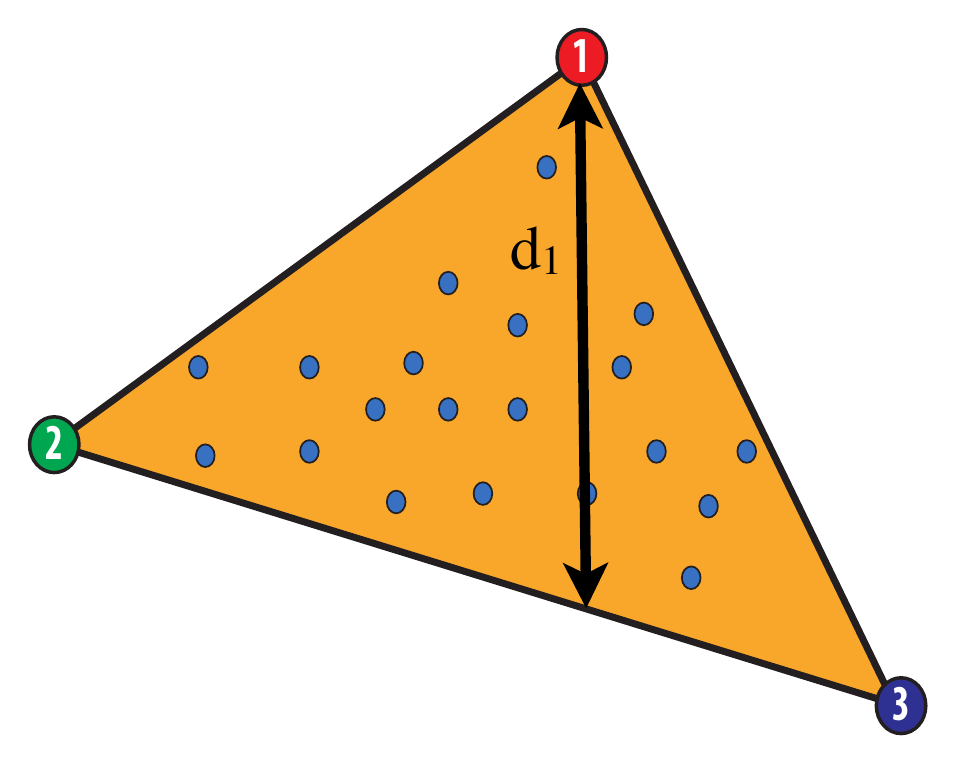}
\captionof{figure}{\small \label{fig:robust-simplicial}  Numbered circles are hott topics.  Their convex hull (orange) contains the other topics (small circles), so the data admits a separable NMF.  The arrow $d_1$ marks the $\ell_1$ distance from hott topic (1) to the convex hull of the other two hott topics; definitions of $d_2$ and $d_3$ are similar. The hott topics are $\alpha$-robustly simplicial when each $d_i \geq \alpha$.}\label{fig:robust-simp} 
\end{minipage}
\end{figure}

These ideas support the uniqueness results of Donoho and Stodden and the AGKM algorithm. Indeed, we can find an NMF of $\mtx{Y}$ efficiently if $\mtx{Y}$ contains a set of $r$ rows that is simplicial and whose convex hull contains the remaining rows.

\begin{definition}
An NMF $\mtx{Y}=\mtx{F}\mtx{W}$ is called~\emph{separable} if the rows of $\mtx{W}$ are simplicial and there is a permutation matrix $\mtx{\Pi}$ such that
\begin{equation}\label{eq:F-form}
	\mtx{\Pi}\mtx{F} = \left[\begin{array}{c} \mathbf{I}_r\\ \mtx{M} \end{array}\right]\,.
\end{equation}
\end{definition}

To compute a separable factorization of $\mtx{Y}$, we must first identify a simplicial set of rows from $\mtx{Y}$.  Afterward, we compute weights that express the remaining rows as convex combinations of this distinguished set.  We call the simplicial rows \emph{hott} and the corresponding features \emph{hott topics}.

This model allows us to express all the features for a particular instance if we know the values of the instance at the simplicial rows.  This assumption can be justified in a variety of applications.  For example, in text, knowledge of a few keywords may be sufficient to reconstruct counts of the other words in a document.  In vision, localized features can be used to predict gestures.  In audio data, a few bins of the spectrogram may allow us to reconstruct the remaining bins.  

While a nonnegative matrix one encounters in practice might not admit a separable factorization, it may be \emph{well-approximated} by a nonnnegative matrix with separable factorization.  AGKM derived an algorithm for nonnegative matrix factorization of a matrix that is well-approximated by a separable factorization.  To state their result,  we introduce a norm on $f \times n$ matrices:
\[
	\ionorm{\mtx{\Delta}} := \max_{1\leq i\leq f} \sum_{j=1}^n |\Delta_{ij}| \,.
\]
\begin{theorem}[AGKM~\cite{Arora12}]
Let $\epsilon$ and $\alpha$ be nonnegative constants satisfying $\epsilon \leq \frac{\alpha^2}{20+13\alpha}$.  Let $\mtx{X}$ be a nonnegative data matrix.
Assume $\mtx{X}=\mtx{Y}+\mtx{\Delta}$ where $\mtx{Y}$ is a nonnegative matrix whose rows have unit $\ell_1$ norm, where $\mtx{Y} = \mtx{FW}$ is a rank-$r$ separable factorization in which the rows of $\mtx{W}$ are $\alpha$-robust simplicial, and where $\ionorm{\mtx{\Delta}}\leq \epsilon$.  Then Algorithm~\ref{alg:arora} finds a rank-$r$ nonnegative factorization $\hat{\mtx{F}}\hat{\mtx{W}}$ that satisfies the error bound $\ionorm{\mtx{X}-\hat{\mtx{F}}\hat{\mtx{W}}} \leq 10 \epsilon/\alpha+7\epsilon$.
\end{theorem}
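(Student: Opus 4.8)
\noindent The plan is to show that Algorithm~\ref{alg:arora} recovers, up to a permutation, a matrix $\hat{\mtx{W}}$ whose rows are $\ell_1$-close to the rows of $\mtx{W}$, and then to read off the error bound by exhibiting a good nonnegative candidate for $\hat{\mtx{F}}$. After applying the permutation $\mtx{\Pi}$ from \eq{F-form} we may assume the first $r$ rows of $\mtx{Y}$ are exactly the rows of $\mtx{W}$ --- the \emph{hott} rows --- and every other row is a convex combination $\mtx{Y}_{k\cdot}=\sum_{l=1}^{r}F_{kl}\mtx{W}_{l\cdot}$ with $F_{kl}\ge 0$ and $\sum_l F_{kl}=1$. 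The bound $\ionorm{\mtx{\Delta}}\le\epsilon$ says exactly that $\|\mtx{X}_{k\cdot}-\mtx{Y}_{k\cdot}\|_1\le\epsilon$ for every $k$.

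The geometric engine is the following consequence of $\alpha$-robust simpliciality. If $\vct{p}=\sum_l c_l\mtx{W}_{l\cdot}$ lies in the hott simplex and $c_i<1$, write $\vct{p}=c_i\mtx{W}_{i\cdot}+(1-c_i)\vct{u}$ with $\vct{u}\in\operatorname{conv}\{\mtx{W}_{l\cdot}:l\ne i\}$; then
\[
	(1-c_i)\,\alpha\ \le\ \|\vct{p}-\mtx{W}_{i\cdot}\|_1\ =\ (1-c_i)\,\|\vct{u}-\mtx{W}_{i\cdot}\|_1\ \le\ 2\,(1-c_i),
\]
the left inequality being robustness and the right one following because the rows are nonnegative with unit $\ell_1$ norm. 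In particular $\|\mtx{W}_{i\cdot}-\mtx{W}_{i'\cdot}\|_1\ge\alpha$ for $i\ne i'$, and, more usefully, the distance of a point of the hott simplex from the vertex $\mtx{W}_{i\cdot}$ and one minus its $i$th barycentric coordinate control each other up to the factor $2/\alpha$; since barycentric coordinates are linear, this passes to convex combinations.

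Using the engine I would partition the rows. \emph{Hott rows enter $R$:} for hott $i$, every $j\in\mathcal{N}_i$ is $\ell_1$-far from $\mtx{X}_{i\cdot}$, hence $\mtx{Y}_{j\cdot}$ is far from $\mtx{W}_{i\cdot}$, hence its $i$th barycentric coordinate is bounded away from $1$ by a fixed multiple of $\epsilon/\alpha$; this bound survives averaging, so $\operatorname{conv}\{\mtx{Y}_{j\cdot}:j\in\mathcal{N}_i\}$ stays $\ell_1$-far from $\mtx{W}_{i\cdot}$, and re-inserting the $\le\epsilon$ perturbations one checks $\delta_i>2\epsilon$. \emph{Interior rows are rejected:} if every barycentric coordinate of $\mtx{Y}_{k\cdot}$ is bounded away from $1$, then every hott row is $\ell_1$-far from $\mtx{X}_{k\cdot}$ and hence lies in $\mathcal{N}_k$, so $\mtx{Y}_{k\cdot}\in\operatorname{conv}\{\mtx{Y}_{j\cdot}:j\in\mathcal{N}_k\}$ and $\delta_k\le 2\epsilon$, so $k\notin R$. \emph{Near-vertex rows cluster correctly:} the remaining rows have some $c_{ki}$ close to $1$, so $\|\mtx{Y}_{k\cdot}-\mtx{W}_{i\cdot}\|_1$, hence $D_{ki}$, falls within the clustering threshold $10\epsilon/\alpha+6\epsilon$, so such a row, if it is in $R$, is grouped with the hott row $i$; meanwhile $\|\mtx{W}_{i\cdot}-\mtx{W}_{i'\cdot}\|_1\ge\alpha$ together with $\epsilon\le\frac{\alpha^2}{20+13\alpha}$ forces $D_{ii'}>10\epsilon/\alpha+6\epsilon$, so distinct hott topics never merge. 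Hence the clustering produces exactly $r$ groups, one per hott topic, each of $\ell_1$-diameter at most $\eta:=10\epsilon/\alpha+6\epsilon$, and the chosen representatives give a matrix $\hat{\mtx{W}}$ with $\ionorm{\hat{\mtx{W}}-\mtx{W}^\star}\le\eta$ where $\mtx{W}^\star=\mtx{Q}\mtx{W}$ for a permutation matrix $\mtx{Q}$.

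For the error bound I would set $\mtx{F}^\star:=\mtx{F}\mtx{Q}^{-1}$, which is nonnegative with unit-$\ell_1$ rows and satisfies $\mtx{F}^\star\mtx{W}^\star=\mtx{F}\mtx{W}=\mtx{Y}$. Then $\mtx{X}-\mtx{F}^\star\hat{\mtx{W}}=\mtx{F}^\star(\mtx{W}^\star-\hat{\mtx{W}})+\mtx{\Delta}$, and since left-multiplication by a nonnegative matrix with row sums at most one does not increase $\ionorm{\cdot}$ (each output row is a convex combination of input rows), $\ionorm{\mtx{X}-\mtx{F}^\star\hat{\mtx{W}}}\le\ionorm{\mtx{W}^\star-\hat{\mtx{W}}}+\ionorm{\mtx{\Delta}}\le\eta+\epsilon=10\epsilon/\alpha+7\epsilon$. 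As $\mtx{F}^\star$ is feasible in the final minimization of the algorithm (restricted to nonnegative $\mtx{Z}$) and $\hat{\mtx{F}}$ is optimal there, $\ionorm{\mtx{X}-\hat{\mtx{F}}\hat{\mtx{W}}}\le 10\epsilon/\alpha+7\epsilon$. The main obstacle is the classification above: one must make the three thresholds built into the algorithm --- $5\epsilon/\alpha+2\epsilon$ for the neighborhoods $\mathcal{N}_k$, $2\epsilon$ for the $\delta_k$ test, and $10\epsilon/\alpha+6\epsilon$ for clustering --- dovetail under the single hypothesis $\epsilon\le\frac{\alpha^2}{20+13\alpha}$, so that no interior row slips into $R$, every row that does enter $R$ is genuinely a near-copy of a hott topic, and the clustering radius is at once wide enough to merge all near-copies of a topic and narrow enough to keep different topics apart. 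All of this is $\ell_1$-simplex geometry plus careful bookkeeping of the $\le\epsilon$ perturbations, and the stated $\epsilon$--$\alpha$ relation is exactly the margin that makes it go through.
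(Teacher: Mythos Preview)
This theorem is not proved in the paper at all: it is stated as the result of Arora, Ge, Kannan, and Moitra~\cite{Arora12} and quoted as background for the paper's own contributions. The appendix proves only Theorem~\ref{prop:noiseless-case} and Theorem~\ref{prop:noisy-case}; there is no in-paper proof of the AGKM theorem to compare your proposal against.

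That said, your outline is a faithful sketch of the argument in~\cite{Arora12}: the three-way classification of rows (hott rows land in $R$, interior rows are rejected, near-vertex rows cluster with their hott topic), driven by the barycentric-coordinate/distance transfer inequality $(1-c_i)\alpha \le \|\vct{p}-\mtx{W}_{i\cdot}\|_1 \le 2(1-c_i)$, is exactly the mechanism, and the final error bound via the feasible candidate $\mtx{F}^\star$ is the standard finishing move. Two cautions if you intend this as a self-contained proof rather than a pointer. First, your ``interior rows are rejected'' step is too telegraphic: you need to quantify ``every barycentric coordinate bounded away from $1$'' precisely so that the threshold $5\epsilon/\alpha+2\epsilon$ really forces all hott rows into $\mathcal{N}_k$, and then check that $\delta_k\le 2\epsilon$ actually follows; this is where the constant $\frac{\alpha^2}{20+13\alpha}$ is consumed, and you have not shown the arithmetic. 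Second, Algorithm~\ref{alg:arora} as written minimizes over $\mtx{Z}\in\R^{f\times r}$, not over nonnegative $\mtx{Z}$, so your parenthetical ``restricted to nonnegative $\mtx{Z}$'' is not quite what the algorithm does; the error bound still follows since the unconstrained minimum can only be smaller, but the claim that $\hat{\mtx{F}}$ is nonnegative then requires a separate word.
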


In particular, the AGKM algorithm computes the factorization exactly when $\epsilon=0$.   Although this method is guaranteed to run in polynomial time, it has many undesirable features.  First, the algorithm requires a priori knowledge of the parameters $\alpha$ and $\epsilon$.  It may be possible to calculate $\epsilon$, but we can only estimate $\alpha$ if we know which rows are hott.  Second, the algorithm computes all $\ell_1$ distances between rows at a cost of $O(f^2 n)$.  Third, for every row in the matrix, we must determine its distance to the convex hull of the rows that lie at a sufficient distance; this step requires us to solve a linear program for each row of the matrix at a cost of $\Omega(fn)$.  Finally, this method is intimately linked to the choice of the error norm $\ionorm{\cdot}$.  It is not obvious how to adapt the algorithm for other noise models.  We present a new approach, based on linear programming, that overcomes these drawbacks.

\section{Main Theoretical Results: NMF by Linear Programming}

This paper shows that we can factor an approximately separable nonnegative matrix by solving a linear program.  A major advantage of this formulation is that
it scales to very large data sets.

Here is the key observation:  Suppose that $\mtx{Y}$ is any $f \times n$ nonnegative matrix that admits a rank-$r$ separable factorization $\mtx{Y} = \mtx{FW}$.   If we pad $\mtx{F}$ with zeros to form an $f \times f$ matrix, we have
\[
\mtx{Y} =  \mtx{\Pi}^T\left[\begin{array}{cc} \mathbf{I}_r & \mtx{0} \\ \mtx{M} & \mtx{0} \end{array}\right] \mtx{\Pi} \mtx{Y} =: \mtx{C}\mtx{Y}
\]
We call the matrix $\mtx{C}$ \emph{factorization localizing}.  Note that any factorization localizing matrix $\mtx{C}$ is an element of the polyhedral set
\[
\Phi(\mtx{Y}):=\{\mtx{C}\geq \mtx{0}~:~  \mtx{CY}=\mtx{Y},\ \trace(\mtx{C}) = r,\ C_{jj} \leq 1 \ \forall j,\ C_{ij}\leq C_{jj} \ \forall i,j\}.
\]

Thus, to find an exact NMF of $\mtx{Y}$, it suffices to find a feasible element of $\mtx{C}\in \Phi(\mtx{Y})$ whose diagonal is integral.  This task can be accomplished by linear programming.  Once we have such a $\mtx{C}$, we construct $\mtx{W}$ by extracting the rows of $\mtx{X}$ that correspond to the indices $i$ where $C_{ii}=1$.  We construct the feature matrix $\mtx{F}$ by extracting the nonzero columns of $\mtx{C}$.  This approach is summarized in Algorithm~\ref{alg:lp-noiseless}.  In turn, we can prove the following result.

\begin{theorem} \label{prop:noiseless-case}
Suppose $\mtx{Y}$ is a nonnegative matrix with a rank-$r$ separable factorization $\mtx{Y} = \mtx{F}\mtx{W}$. Then Algorithm~\ref{alg:lp-noiseless} constructs a rank-$r$ nonnegative matrix factorization of $\mtx{Y}$.
\end{theorem}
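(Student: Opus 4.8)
The plan is to establish three things: (1) the polyhedron $\Phi(\mtx{Y})$ is nonempty; (2) \emph{every} $\mtx{C}\in\Phi(\mtx{Y})$ has a $0/1$ diagonal, with the ones located exactly at the hott rows and all non-hott columns forced to vanish; and (3) the factors that Algorithm~\ref{alg:lp-noiseless} reads off from such a $\mtx{C}$ constitute a rank-$r$ nonnegative factorization of $\mtx{Y}$. Fact (1) is already in hand: writing $\mtx{\Pi}\mtx{F}=[\mathbf{I}_r;\mtx{M}]$ and padding $\mtx{F}$ with zero columns produces the factorization-localizing matrix $\mtx{C}$ introduced above, and one checks directly that it is nonnegative, satisfies $\mtx{C}\mtx{Y}=\mtx{Y}$, has $\trace(\mtx{C})=r$ and $C_{jj}\in\{0,1\}$, and obeys $C_{ij}\le C_{jj}$ because the rows of $\mtx{F}$—hence the entries of $\mtx{M}$—are nonnegative and sum to one. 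So the linear program in Algorithm~\ref{alg:lp-noiseless} is feasible.

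Fact (2) is the heart of the matter, and the step I expect to be the main obstacle: it says the LP relaxation ``secretly'' carries an integral diagonal. Fix any $\mtx{C}\in\Phi(\mtx{Y})$. Since every row of $\mtx{Y}$ sums to one, the equality $\mtx{C}\mtx{Y}=\mtx{Y}$ forces every row of $\mtx{C}$ to sum to one as well; together with $\mtx{C}\ge\mtx{0}$ this means row $i$ of $\mtx{C}$ expresses $\mtx{Y}_{i\cdot}$ as a \emph{convex} combination of the rows of $\mtx{Y}$. Substituting $\mtx{Y}=\mtx{F}\mtx{W}$ and using that the rows of $\mtx{F}$ also sum to one, I obtain a convex representation of $\mtx{Y}_{i\cdot}$ in terms of the rows of $\mtx{W}$. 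Now take $i=h$ a hott index, so that $\mtx{Y}_{h\cdot}$ is itself one of the rows of $\mtx{W}$; because the rows of $\mtx{W}$ are simplicial, a convex combination of them can equal $\mtx{Y}_{h\cdot}$ only by placing all of its weight on $\mtx{Y}_{h\cdot}$. Unwinding this through $\mtx{F}$—using $F_{hk}=1$ for the hott topic $k$ attached to $h$, together with $0\le F_{jk}\le1$ and the row-sum $\sum_j C_{hj}=1$—pins down $C_{hh}=1$. (The only obstruction is the degenerate possibility that a non-hott row of $\mtx{Y}$ literally coincides with a hott row, i.e.\ some row of $\mtx{M}$ is a standard basis vector; this measure-zero case can be excluded by a mild genericity assumption or dispatched separately.) With $C_{hh}=1$ at each of the $r$ hott indices, the constraint $\trace(\mtx{C})=r$ leaves $C_{jj}=0$ for every non-hott $j$, and then $0\le C_{ij}\le C_{jj}=0$ zeroes out each non-hott column of $\mtx{C}$.

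Fact (3) then follows by bookkeeping. Let $\hat{\mtx{W}}$ be the sub-matrix of $\mtx{Y}$ consisting of the rows $i$ with $C_{ii}=1$, and $\hat{\mtx{F}}$ the sub-matrix of $\mtx{C}$ consisting of its nonzero columns. By (2) there are exactly $r$ such rows (the hott ones), the nonzero columns of $\mtx{C}$ are exactly the $r$ hott columns, and the columns discarded from $\mtx{C}$ are precisely those indexing rows of $\mtx{Y}$ omitted from $\hat{\mtx{W}}$; hence $\hat{\mtx{F}}\hat{\mtx{W}}=\mtx{C}\mtx{Y}=\mtx{Y}$. Both factors are nonnegative, $\hat{\mtx{F}}$ has $r$ columns (and in fact contains $\mathbf{I}_r$ as a sub-matrix, since the hott row $h$ contributes a standard basis vector) and $\hat{\mtx{W}}$ has $r$ rows, so $\mtx{Y}=\hat{\mtx{F}}\hat{\mtx{W}}$ is the desired rank-$r$ nonnegative matrix factorization. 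The only place that requires genuine care is the uniqueness-of-convex-representation argument in Fact (2); everything else is a direct consequence of the row normalization and the box constraints $0\le C_{ij}\le C_{jj}\le1$.
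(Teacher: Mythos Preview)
Your argument tracks the paper's reasoning closely in the no-duplicate case: row normalization forces each row of $\mtx{C}$ to be stochastic, simpliciality of the rows of $\mtx{W}$ then pins the convex representation of a hott row, and the trace constraint plus $C_{ij}\le C_{jj}$ finishes the job. That part is fine and essentially identical to the paper.

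The gap is your treatment of duplicates. You assert that \emph{every} $\mtx{C}\in\Phi(\mtx{Y})$ has a $0/1$ diagonal, and you label the possibility that some non-hott row coincides with a hott row a ``measure-zero'' degeneracy to be waved away. Neither move is sound here. First, the claim is simply false when duplicates are present: if rows $1$ and $2$ are identical copies of a hott topic, the choice $C_{11}=C_{22}=C_{12}=C_{21}=\tfrac12$ (with the rest of the weight on the remaining hott rows) is feasible but has a fractional diagonal. Second, duplicates are not excluded by the theorem's hypotheses; indeed the paper's model and experiments explicitly allow them, and the cost vector $\vct{p}$ with \emph{distinct} entries in Algorithm~\ref{alg:lp-noiseless} is there precisely for this reason. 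Your proof never invokes the LP objective $\vct{p}^T\diag(\mtx{C})$, so you have no mechanism to break ties.

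The fix---and this is exactly what the paper does---is to weaken Fact~(2) to the statement that all diagonal mass in any feasible $\mtx{C}$ must sit on rows that are copies of hott topics (your simpliciality argument already gives this: it shows $\sum_j C_{hj}F_{jk}=1$, which forces $C_{hj}=0$ unless $F_{jk}=1$, i.e.\ unless row $j$ is a copy of topic $k$). Then argue that among the duplicates of each topic the \emph{optimal} $\mtx{C}$ must put the full unit of mass on the single copy with smallest $p_j$, since otherwise shifting mass to that copy strictly decreases the objective while preserving feasibility. This yields an integral diagonal at the optimum, selecting exactly one exemplar per topic, after which your Fact~(3) goes through unchanged.
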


As the theorem suggests, we can isolate the rows of $\mtx{Y}$ that yield a simplicial factorization  by solving a single linear program.  The factor $\mtx{F}$ can be found by extracting columns of $\mtx{C}$.

\begin{algorithm}[t]
  \caption{Separable Nonnegative Matrix Factorization by Linear Programming}
   \begin{algorithmic}[1]
  \REQUIRE An $f\times n$ nonnegative matrix $\mtx{Y}$ with a rank-$r$ separable NMF.
  \ENSURE An $f\times r$ matrix $\mtx{F}$ and $r\times n$  matrix $\mtx{W}$ with $\mtx{F}\geq \mtx{0}$, $\mtx{W}\geq \mtx{0}$, and $\mtx{Y}=\mtx{F}\mtx{W}$.
  \STATE Find the unique $\mtx{C} \in \Phi(\mtx{Y})$ to minimize $\vct{p}^T \diag(\mtx{C})$ where $\vct{p}$ is any vector with distinct values.
  \STATE Let $I=\{i~:~C_{ii}=1\}$ and set $\mtx{W}=\mtx{Y}_{I\cdot}$ and $\mtx{F}=\mtx{C}_{\cdot I}$.
\end{algorithmic} \label{alg:lp-noiseless}
\end{algorithm}

\subsection{Robustness to Noise}\label{sec:robustness}

Suppose we observe a nonnegative matrix $\mtx{X}$ whose rows sum to one.  Assume that $\mtx{X} = \mtx{Y} + \mtx{\Delta}$ where $\mtx{Y}$ is a nonnegative matrix whose rows sum to one, which has a rank-$r$ separable factorization $\mtx{Y} = \mtx{FW}$ such that the rows of $\mtx{W}$ are $\alpha$-robust simplicial, and where $\ionorm{\mtx{\Delta}} \leq \epsilon$.  Define the polyhedral set
\[
\Phi_\tau(\mtx{X}):=\left\{\mtx{C}\geq \mtx{0}~:~  \ionorm{\mtx{C}\mtx{X}-\mtx{X}}\leq \tau, \trace(\mtx{C}) = r, C_{jj} \leq 1 \ \forall j, C_{ij}\leq C_{jj}\ \forall i,j\right\}
\]
The set $\Phi(\mtx{X})$ consists of matrices $\mtx{C}$ that \emph{approximately} locate a factorization of $\mtx{X}$.  We can prove the following result.

\begin{theorem}\label{prop:noisy-case}
Suppose that $\mtx{X}$ satisfies the assumptions stated in the previous paragraph.
Furthermore, assume that for every row $\mtx{Y}_{j,\cdot}$ that is not hott, we have the margin constraint $\|\mtx{Y}_{j,\cdot} - \mtx{Y}_{i,\cdot}\|\geq d_0$ for all hott rows $i$.  Then we can find a nonnegative factorization satisfying $\ionorm{\mtx{X}-\hat{\mtx{F}}\hat{\mtx{W}}}\leq 2\epsilon$ provided that $\epsilon <\tfrac{ \min\{ \alpha d_0, \alpha^2\} }{9(r+1)}$.  Furthermore, this factorization correctly identifies the hott topics appearing in the separable factorization of $\mtx{Y}$.
\end{theorem}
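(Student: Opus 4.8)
The plan is to run the natural noisy analogue of Algorithm~\ref{alg:lp-noiseless}: fix the tolerance $\tau = 2\epsilon$, find the $\mtx{C}\in\Phi_{2\epsilon}(\mtx{X})$ that minimizes $\vct{p}^{T}\diag(\mtx{C})$ for a fixed $\vct{p}$ with distinct entries, form the index set $I = \{\,i : C_{ii} > \tfrac12\,\}$, put $\hat{\mtx{W}} = \mtx{X}_{I\cdot}$, and take $\hat{\mtx{F}} = \arg\min_{\mtx{Z}\geq\mtx{0}}\ionorm{\mtx{X} - \mtx{Z}\mtx{X}_{I\cdot}}$. Write $H$ for the set of $r$ hott rows of $\mtx{Y} = \mtx{F}\mtx{W}$ and let $\mtx{C}^{\natural}$ be the zero-padded factorization-localizing matrix of this exact separable factorization, so that $\mtx{C}^{\natural}\geq\mtx{0}$, $\mtx{C}^{\natural}\mtx{Y} = \mtx{Y}$, $\trace(\mtx{C}^{\natural}) = r$, and the columns of $\mtx{C}^{\natural}$ outside $H$ are zero. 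The first step is feasibility: because we may take the rows of $\mtx{Y}$, $\mtx{W}$, and $\mtx{F}$ to sum to one, every row of $\mtx{C}^{\natural}$ is a probability vector, hence $\ionorm{\mtx{C}^{\natural}\mtx{\Delta}}\leq\ionorm{\mtx{\Delta}}\leq\epsilon$; combined with $\mtx{C}^{\natural}\mtx{Y} = \mtx{Y}$ this gives $\ionorm{\mtx{C}^{\natural}\mtx{X} - \mtx{X}} = \ionorm{\mtx{C}^{\natural}\mtx{\Delta} - \mtx{\Delta}}\leq 2\epsilon$, while $C_{jj}\leq 1$ and $C_{ij}\leq C_{jj}$ hold by construction. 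Thus $\mtx{C}^{\natural}\in\Phi_{2\epsilon}(\mtx{X})$, and since the feasible set is closed and bounded a minimizer exists.

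The heart of the proof is the structural claim that \emph{every} $\mtx{C}\in\Phi_{2\epsilon}(\mtx{X})$ satisfies $\{\,i : C_{ii} > \tfrac12\,\} = H$; the returned minimizer is one such $\mtx{C}$, so this already delivers the ``correctly identifies the hott topics'' conclusion. Fix $\mtx{C}\in\Phi_{2\epsilon}(\mtx{X})$ and let $s_{i} = \sum_{k}C_{ik}$. Comparing row sums on the two sides of $\ionorm{\mtx{C}\mtx{X} - \mtx{X}}\leq 2\epsilon$, and using that the rows of $\mtx{X}$ sum to one, gives $|s_{i} - 1|\leq 2\epsilon$. Now split $\mtx{C}\mtx{X} = \mtx{C}\mtx{Y} + \mtx{C}\mtx{\Delta}$ and write each row of $\mtx{Y}$ as a convex combination of the hott rows with stochastic coefficient matrix $\mtx{B}$ (so $\mtx{B}_{k\cdot} = \vct{e}_{k}$ for $k\in H$), giving $(\mtx{C}\mtx{Y})_{i\cdot} = \sum_{l\in H}\mu_{il}\mtx{Y}_{l\cdot}$ with $\mu_{il} = \sum_{k}C_{ik}B_{kl}$ and $\sum_{l\in H}\mu_{il} = s_{i}$. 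Dividing by $s_{i}$ produces an honest convex combination $\bar{\vct{y}}_{i} := (\mtx{C}\mtx{Y})_{i\cdot}/s_{i} = \sum_{l\in H}(\mu_{il}/s_{i})\mtx{Y}_{l\cdot}$ of the hott rows, and assembling the bounds $\ionorm{\mtx{C}\mtx{\Delta}}\leq(1+2\epsilon)\epsilon$, $\ionorm{\mtx{\Delta}}\leq\epsilon$, and $|s_{i} - 1|\leq 2\epsilon$ shows $\|\bar{\vct{y}}_{i} - \mtx{Y}_{i\cdot}\|_{1}\leq c\epsilon$ for a small absolute constant $c$ and every $i$.

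Here the $\alpha$-robust simplicial property of $\{\mtx{Y}_{l\cdot}\}_{l\in H}$ enters through the standard stability fact: a convex combination of $\alpha$-robust simplicial vectors that lies within $\delta$ of the vertex $\mtx{Y}_{i\cdot}$, $i\in H$, must have its $i$-th coefficient at least $1 - \delta/\alpha$. Applied with $\delta = c\epsilon$, this forces $\mu_{ii}/s_{i}\geq 1 - c\epsilon/\alpha$ for $i\in H$. To turn this into a lower bound on $C_{ii}$ I would use the margin hypothesis: for a non-hott index $k$ we can write $\mtx{Y}_{k\cdot} = B_{ki}\mtx{Y}_{i\cdot} + (1 - B_{ki})\vct{u}$ with $\vct{u}$ in the convex hull of the other hott rows, so $d_{0}\leq\|\mtx{Y}_{k\cdot} - \mtx{Y}_{i\cdot}\|_{1} = (1 - B_{ki})\|\vct{u} - \mtx{Y}_{i\cdot}\|_{1}\leq 2(1 - B_{ki})$, i.e.\ $B_{ki}\leq 1 - d_{0}/2$. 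Substituting this bound and $\sum_{k\notin H}C_{ik}\leq s_{i} - C_{ii}$ into $\mu_{ii} = C_{ii} + \sum_{k\notin H}C_{ik}B_{ki}$ and solving the resulting linear inequality yields $C_{ii}\geq s_{i}\bigl(1 - \tfrac{2c\epsilon}{\alpha d_{0}}\bigr)$, which exceeds $\tfrac12$ under the hypothesis $\epsilon < \min\{\alpha d_{0},\alpha^{2}\}/\bigl(9(r+1)\bigr)$. Summing over $i\in H$ and using $\trace(\mtx{C}) = r$ then forces $\sum_{k\notin H}C_{kk} = r - \sum_{i\in H}C_{ii}$ to be small, so $C_{kk} < \tfrac12$ for every $k\notin H$; therefore $I = H$.

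Finally, once $I = H$ is established, $\hat{\mtx{W}} = \mtx{X}_{H\cdot}$ recovers the hott topics exactly, and using $\mtx{C}^{\natural}_{\cdot H}\geq\mtx{0}$ as a competitor in the definition of $\hat{\mtx{F}}$, together with $\mtx{C}^{\natural}_{\cdot H}\mtx{X}_{H\cdot} = \mtx{C}^{\natural}\mtx{X}$ (the columns of $\mtx{C}^{\natural}$ outside $H$ vanish), gives $\ionorm{\mtx{X} - \hat{\mtx{F}}\hat{\mtx{W}}}\leq\ionorm{\mtx{X} - \mtx{C}^{\natural}\mtx{X}}\leq 2\epsilon$ by the first paragraph, with $\hat{\mtx{F}}\hat{\mtx{W}}\geq\mtx{0}$ of rank at most $r$. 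I expect the main obstacle to be the constant bookkeeping in the two middle paragraphs: producing the clean separation $C_{kk} < \tfrac12 < C_{ii}$ requires the $\alpha$-robustness (to pin down $\mu_{ii}/s_{i}$) and the margin $d_{0}$ (to pin down $B_{ki}$) to cooperate, and tracking their interaction so that everything collapses to exactly $\epsilon < \min\{\alpha d_{0},\alpha^{2}\}/(9(r+1))$ — in particular accounting for the $r+1$ and for the $\alpha^{2}$ branch of the minimum, which presumably enters through a sharper handling of the perturbed simplex than the crude sketch above — is where the real effort lies; feasibility and the final error estimate are comparatively routine.
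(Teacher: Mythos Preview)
Your approach matches the paper's closely. The paper packages your two inline calculations as lemmas: the bound $B_{ki}\leq 1-d_0/2$ for non-hott $k$ is exactly its Lemma~A.1, and combining this margin bound with the $\alpha$-robust stability fact to pin down the hott diagonal entry is the content of its Lemma~A.2, which yields $C_{ii}\geq 1-(8\epsilon+4\epsilon^2)/\min\{\alpha d_0,\alpha^2\}$ for every hott $i$ and every $\mtx{C}\in\Phi_{2\epsilon}(\mtx{X})$. Feasibility of $\mtx{C}^{\natural}$, the row-sum estimate $|s_i-1|\leq 2\epsilon$, the transfer to $\ionorm{\mtx{C}\mtx{Y}-\mtx{Y}}\leq 4\epsilon+2\epsilon^2$, and the final $2\epsilon$ error bound via the competitor $\mtx{C}^{\natural}$ are all handled identically in the paper.

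The one substantive issue is the one you already flag: the threshold $\tfrac12$ cannot work. From $C_{ii}>\tfrac12$ for the $r$ hott rows you obtain only $\sum_{k\notin H}C_{kk}<r/2$, and for $r\geq 2$ this does not prevent a single non-hott diagonal from approaching $1$. The paper's remedy is to push the lower bound up to $C_{ii}>1-\tfrac{1}{r+1}=\tfrac{r}{r+1}$; then the leftover trace is below $\tfrac{r}{r+1}$, so every non-hott diagonal lies strictly below every hott one and the hott rows are exactly the $r$ largest diagonals. Solving $1-(8\epsilon+4\epsilon^2)/\min\{\alpha d_0,\alpha^2\}>\tfrac{r}{r+1}$ for $\epsilon$ is precisely where the constant $9(r+1)$ appears. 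As for the $\alpha^{2}$ branch you wondered about: in the paper it enters because Lemma~A.2 is stated for a general ball radius $\delta$ and must cap $\delta$ at $\alpha$ to keep other hott rows out of $\mathcal{B}_{\delta}(i)$. In your organization the other hott rows contribute nothing to $\mu_{ii}$ since $B_{ki}=0$ for $k\in H\setminus\{i\}$, so your bound $C_{ii}\geq s_i\bigl(1-2c\epsilon/(\alpha d_0)\bigr)$ is already enough; the hypothesis $\epsilon<\min\{\alpha d_0,\alpha^2\}/\bigl(9(r+1)\bigr)$ implies $\epsilon<\alpha d_0/\bigl(9(r+1)\bigr)$, which is all your inequality needs.
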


Algorithm~\ref{alg:lp-noisycase} requires the solution of two linear programs.  The first minimizes a cost vector over $\Phi_{2\epsilon}(\mtx{X})$.  This lets us find $\hat{\mtx{W}}$.  Afterward, the matrix $\hat{\mtx{F}}$ can be found by setting
\begin{equation}\label{eq:cleaning}
\hat{\mtx{F}} = \arg\min_{\mtx{Z}\geq \mtx{0}} ~ \ionorm{\mtx{X} - \mtx{Z}\hat{\mtx{W}}} \,.
\end{equation}

\begin{algorithm}[t]
  \caption{Approximably Separable Nonnegative Matrix Factorization by Linear Programming}
   \begin{algorithmic}[1]
  \REQUIRE An $f\times n$ nonnegative matrix $\mtx{X}$ that satisfies the hypotheses of Theorem~\ref{prop:noisy-case}.
  \ENSURE An $f\times r$ matrix $\mtx{F}$ and $r\times n$  matrix $\mtx{W}$ with $\mtx{F}\geq \mtx{0}$, $\mtx{W}\geq \mtx{0}$, and $\ionorm{\mtx{X}-\mtx{F}\mtx{W}}\leq 2\epsilon$.
  \STATE Find $\mtx{C} \in \Phi_{2\epsilon}(\mtx{X})$ that minimizes $\vct{p}^T \diag{\mtx{C}}$ where $\vct{p}$ is any vector with distinct values.
  \STATE Let $I=\{i~:~C_{ii}=1\}$ and set $\mtx{W}=\mtx{X}_{I\cdot}$.
\STATE Set $\mtx{F} = \arg\min_{\mtx{Z}\in \R^{f\times r}} \ionorm{\mtx{X} - \mtx{Z} \mtx{W}}$
\end{algorithmic} \label{alg:lp-noisycase}
\end{algorithm}

Our robustness result requires a \emph{margin-type} constraint assuming that the original configuration consists either of duplicate hott topics, or topics that are reasonably far away from the hott topics.  On the other hand, under such a margin constraint, we can construct a considerably better approximation than that guaranteed by the AGKM algorithm.  Moreover, unlike AGKM, our algorithm does not need to know the parameter $\alpha$.

The proofs of Theorems~\ref{prop:noiseless-case} and~\ref{prop:noisy-case} can be found in the appendix.  The main idea is to show that we can only represent a hott topic efficiently using the hott topic itself.  Some earlier versions of this paper contained incomplete arguments, which we have remedied.  For a signifcantly stronger robustness analysis of Algorithm~\ref{alg:lp-noisycase}, see the recent paper~\cite{Gil12:Robustness-Analysis}.

Having established these theoretical guarantees, it now remains to develop an algorithm to solve the LP.  Off-the-shelf LP solvers may suffice for moderate-size problems, but for large-scale matrix factorization problems, their running time is prohibitive, as we show in Section~\ref{sec:experiments}.  In Section~\ref{sec:igd}, we turn to describe how to solve Algorithm~\ref{alg:lp-noisycase} efficiently for large data sets.

\subsection{Related Work}

Localizing factorizations via column or row subset selection is a popular alternative to direct factorization methods such as the SVD.  
Interpolative decomposition such as Rank-Revealing QR~\cite{Gu96} and CUR~\cite{MahoneyDrineas09} have favorable efficiency properties as compared to factorizations (such as SVD) that are not based on exemplars.  Factorization localization has been used in subspace clustering and has been shown to be robust to outliers~\cite{Elhamifar09,Soltanolkotabi11}.

In recent work on dictionary learning, Esser et al.~and Elhamifar et al.~have proposed a factorization localization solution to nonnegative matrix factorization using group sparsity techniques~\cite{Esser12,Elhamifar12}.  Esser et al.~prove asymptotic exact recovery in a restricted noise model, but this result requires preprocessing to remove duplicate or near-duplicate rows.  Elhamifar shows exact representative recovery in the noiseless setting assuming no hott topics are duplicated.  Our work here improves upon this work in several aspects, enabling finite sample error bounds, the elimination of any need to preprocess the data, and algorithmic implementations that scale to very large data sets.

\section{Incremental Gradient Algorithms for NMF}\label{sec:igd}

The rudiments of our fast implementation rely on two standard optimization techniques: dual decomposition and incremental gradient descent.  Both techniques are described in depth in Chapters 3.4 and 7.8 of Bertsekas and Tstisklis~\cite{BertsekasParallelBook}.

We aim to minimize $\vct{p}^T \diag(\mtx{C})$ subject to $\mtx{C} \in \Phi_{\tau}(\mtx{X})$.  To proceed, form the Lagrangian
\[
	\mathcal{L}(\mtx{C},\beta,\vct{w})=\vct{p}^T \diag(\mtx{C}) + \beta ( \trace(\mtx{C}) - r) + \sum_{i=1}^f w_i \left(\|\mtx{X}_{i\cdot} - [\mtx{C}\mtx{X}]_{i\cdot}\|_1 - \tau\right)
\]
with multipliers $\beta$ and $\vct{w}\geq \mtx{0}$.  Note that we do not dualize out all of the constraints.  The remaining ones appear in the constraint set $\Phi_0 = \{\mtx{C}~:~ \mtx{C}\geq \mtx{0}, \ \diag(\mtx{C})\leq 1, \mbox{ and } C_{ij}\leq C_{jj}~\mbox{for all}~i,j\}$.

Dual subgradient ascent solves this problem by alternating between minimizing the Lagrangian over the constraint set $\Phi_0$, and then taking a subgradient step with respect to the dual variables
\[
	 w_i \leftarrow w_i + s\left(\|\mtx{X}_{i\cdot} - [\mtx{C}^\star\mtx{X}]_{i\cdot}\|_1 - \tau\right) \quad\mbox{and}\quad \beta \leftarrow \beta + s( \trace(\mtx{C}^\star) - r) 
\]
where $\mtx{C}^\star$ is the minimizer of the Lagrangian over $\Phi_0$.  The update of $w_i$ makes very little difference in the solution quality, so we typically only update $\beta$.

We minimize the Lagrangian using projected incremental gradient descent.  Note that we can rewrite the Lagrangian as
\[
\mathcal{L}(\mtx{C},\beta,\vct{w})= -\tau \vct{1}^T\vct{w} - \beta r + \sum_{k=1}^n \left( \sum_{j\in\operatorname{supp}(\mtx{X}_{\cdot k})}  w_j \|\mtx{X}_{jk} - [\mtx{C}\mtx{X}]_{jk}\|_1 +\mu_j (p_j + \beta) C_{jj} \right)\,.
\]
Here, $\operatorname{supp}(\vct{x})$ is the set indexing the entries where $\vct{x}$ is nonzero, and $\mu_j$ is the number of nonzeros in row $j$ divided by $n$.  The incremental gradient method chooses one of the $n$ summands at random and follows its subgradient.  We then project the iterate onto the constraint set $\Phi_0$.  The projection onto $\Phi_0$ can be performed in the time required to sort the individual columns of $\mtx{C}$ plus a linear-time operation.  The full procedure is described in Appendix~\ref{sec:algorithmic-details}.   In the case where we expect a unique solution, we can drop the constraint $C_{ij} \leq C_{jj}$, resulting in a simple clipping procedure: set all negative items to zero and set any diagonal entry exceeding one to one.  In practice, we perform a tradeoff.  Since the constraint $C_{ij} \leq C_{jj}$ is used solely for symmetry breaking, we have found empirically that we only need to project onto $\Phi_0$ every $n$ iterations or so.
	
This incremental iteration is repeated $n$ times in a phase called an \emph{epoch}.  After each epoch, we update the dual variables and quit after we believe we have identified the large elements of the diagonal of $\mtx{C}$.  Just as before, once we have identified the hott rows, we can form $\mtx{W}$ by selecting these rows of $\mtx{X}$.  We can find $\mtx{F}$ just as before, by solving~\eq{cleaning}.  Note that this minimization can also be computed by incremental subgradient descent.  The full procedure, called \name, is described in Algorithm~\ref{alg:sgd-noisycase}.

\begin{algorithm}[t]
  \caption{\name: Approximate Separable NMF by Incremental Gradient Descent}
   \begin{algorithmic}[1]
  \REQUIRE An $f\times n$ nonnegative matrix $\mtx{X}$.  Primal and dual stepsizes $s_p$ and $s_d$.  
  \ENSURE An $f\times r$ matrix $\mtx{F}$ and $r\times n$  matrix $\mtx{W}$ with $\mtx{F}\geq \mtx{0}$, $\mtx{W}\geq \mtx{0}$, and $\ionorm{\mtx{X}-\mtx{F}\mtx{W}}\leq 2\epsilon$.
	\STATE Pick a cost $\vct{p}$ with distinct entries.
	\STATE Initialize $\mtx{C}=\mtx{0}$, $\beta=0$
  \FOR{$t = 1,\ldots, N_{epochs}$}
  \FOR{$i =1,\ldots n$}
  	\STATE Choose $k$ uniformly at random from $[n]$.
	\STATE \label{step:C-update} $\mtx{C}\leftarrow \mtx{C} + s_p \cdot  \operatorname{sign}(\mtx{X}_{\cdot k}-\mtx{C}\mtx{X}_{\cdot k}) \mtx{X}_{\cdot k}^T- s_p \diag( \vct{\mu} \circ (\beta \vct{1} - \vct{p}) )$.
\ENDFOR
	\STATE  Project $\mtx{C}$ onto $\Phi_0$.
	\STATE $\beta \leftarrow \beta + s_d(\trace(\mtx{C})-r)$
\ENDFOR
  \STATE Let $I=\{i~:~C_{ii}=1\}$ and set $\mtx{W}=\mtx{X}_{I\cdot}$.
\STATE Set $\mtx{F} = \arg\min_{\mtx{Z}\in \R^{f\times r}} \ionorm{\mtx{X} - \mtx{Z} \mtx{W}}$
\end{algorithmic} \label{alg:sgd-noisycase}
\end{algorithm}

\subsection{Sparsity and Computational Enhancements for Large Scale.}\label{sec:alg-ls}
For small-scale problems, \name can be implemented in a few lines of Matlab code.  But for the very large data sets studied in Section~\ref{sec:experiments}, we take advantage of natural parallelism and a host of low-level
optimizations that are also enabled by our formulation. As in any numerical program, memory layout and cache
behavior can be critical factors for performance. We use standard
techniques: in-memory clustering to increase prefetching opportunities,
padded data structures for better cache alignment, and compiler
directives to allow the Intel compiler to apply vectorization.

Note that the incremental gradient step (step~\ref{step:C-update} in Algorithm~\ref{alg:sgd-noisycase}) only modifies the entries of $\mtx{C}$ where $\mtx{X}_{\cdot k}$ is nonzero.  Thus, we can parallelize the algorithm with respect to updating either the rows or the columns of $\mtx{C}$.  We store $\mtx{X}$ in large contiguous blocks of memory to encourage hardware prefetching. In contrast, we choose a dense representation of our localizing matrix $\mtx{C}$; this choice trades space for runtime performance. 

Each worker thread is assigned a number of rows of $\mtx{C}$ so that all rows fit in the shared L3 cache. Then,
each worker thread repeatedly scans $\mtx{X}$ while marking updates to multiple rows of $\mtx{C}$. We repeat this process until all rows of $\mtx{C}$ are scanned, similar to the classical block-nested loop join in relational databases~\cite{Shapiro86}.

\section{Experiments}\label{sec:experiments}

Except for the speedup curves, all of the experiments were run on an identical configuration: a dual Xeon X650 (6 cores each) machine with 128GB of RAM.  The kernel is Linux 2.6.32-131.

In small-scale, synthetic experiments, we compared \name to the AGKM algorithm and the linear programming formulation of Algorithm~\ref{alg:lp-noisycase} implemented in Matlab.  Both AGKM and Algorithm~\ref{alg:lp-noisycase} were run using CVX~\cite{cvx} coupled to the SDPT3 solver~\cite{SDPT3}.  We ran \name for $50$ epochs with primal stepsize 1e-1 and dual stepsize 1e-2.  Once the hott topics were identified, we fit $\mtx{F}$ using two cleaning epochs of incremental gradient descent for all three algorithms.

To generate our instances, we sampled $r$ hott topics uniformly from the unit simplex in $\R^n$.  These topics were duplicated $d$ times.  We generated the remaining $f-r(d+1)$ rows to be random convex combinations of the hott topics, with the combinations selected uniformly at random.  We then added noise with $(\infty,1)$-norm error bounded by $\eta \cdot \frac{\alpha^2}{20+13\alpha}$.  Recall that AGKM algorithm is only guaranteed to work for $\eta<1$.  We ran with $f\in\{40,80,160\}$, $n\in\{400,800,1600\}$, $r\in \{3,5,10\}$, $d\in\{0,1,2\}$, and $\eta\in \{0.25, 0.95, 4, 10, 100\}$. Each experiment was repeated $5$ times.

Because we ran over $2000$ experiments with $405$ different parameter settings, it is convenient to use the \emph{performance profiles} to compare the performance of the different algorithms~\cite{Dolan02}. Let $\mathcal{P}$ be the set of experiments and $\mathcal{A}$ denote the set of different algorithms we are comparing.
Let $Q_a(p)$ be the value of some performance metric of the experiment $p\in \mathcal{P}$ for algorithm $a\in \mathcal{A}$.  Then the performance profile at $\tau$ for a particular algorithm is the fraction of the experiments where the value of $Q_a(p)$ lies within a factor of $\tau$ of the minimal value of $\min_{b\in\mathcal{A}} Q_b(p)$.  That is,
\begin{equation*}
P_a(\tau) = \frac{\mathop{\#}\left\{p \in \mathcal{P} ~:~ Q_a(p) \leq \tau \min_{a'\in\mathcal{A}} Q_{a'}(p)\right\}}{\mathop{\#}(\mathcal{P})}\,.
\end{equation*}
In a performance profile, the higher a curve corresponding to an algorithm, the more often it outperforms the other algorithms.  This gives a convenient way to  contrast algorithms visually.

\begin{figure}[t]
\centering
\begin{tabular}{ccc}
\includegraphics[width=.3\textwidth]{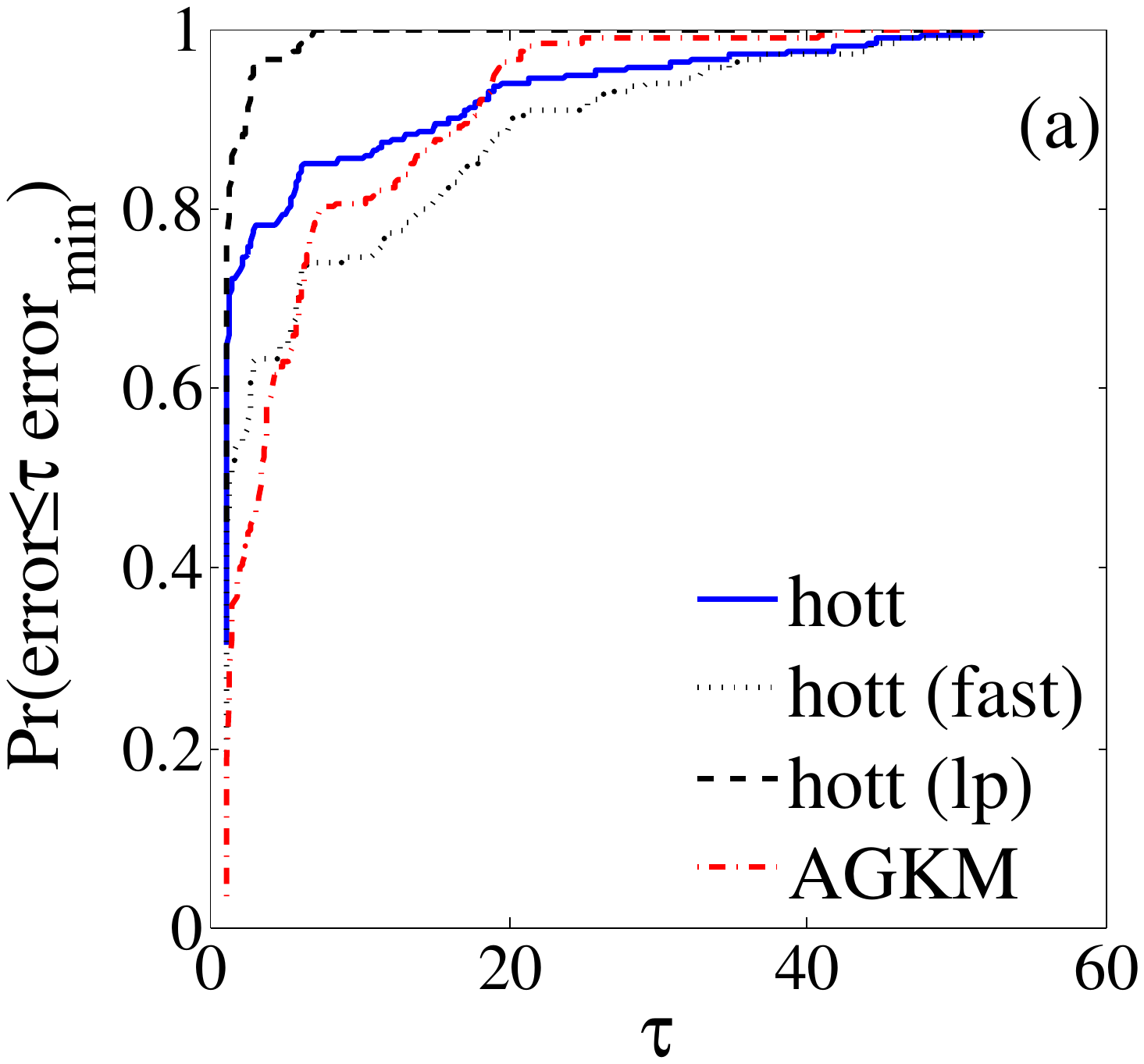} & 
\includegraphics[width=.3\textwidth]{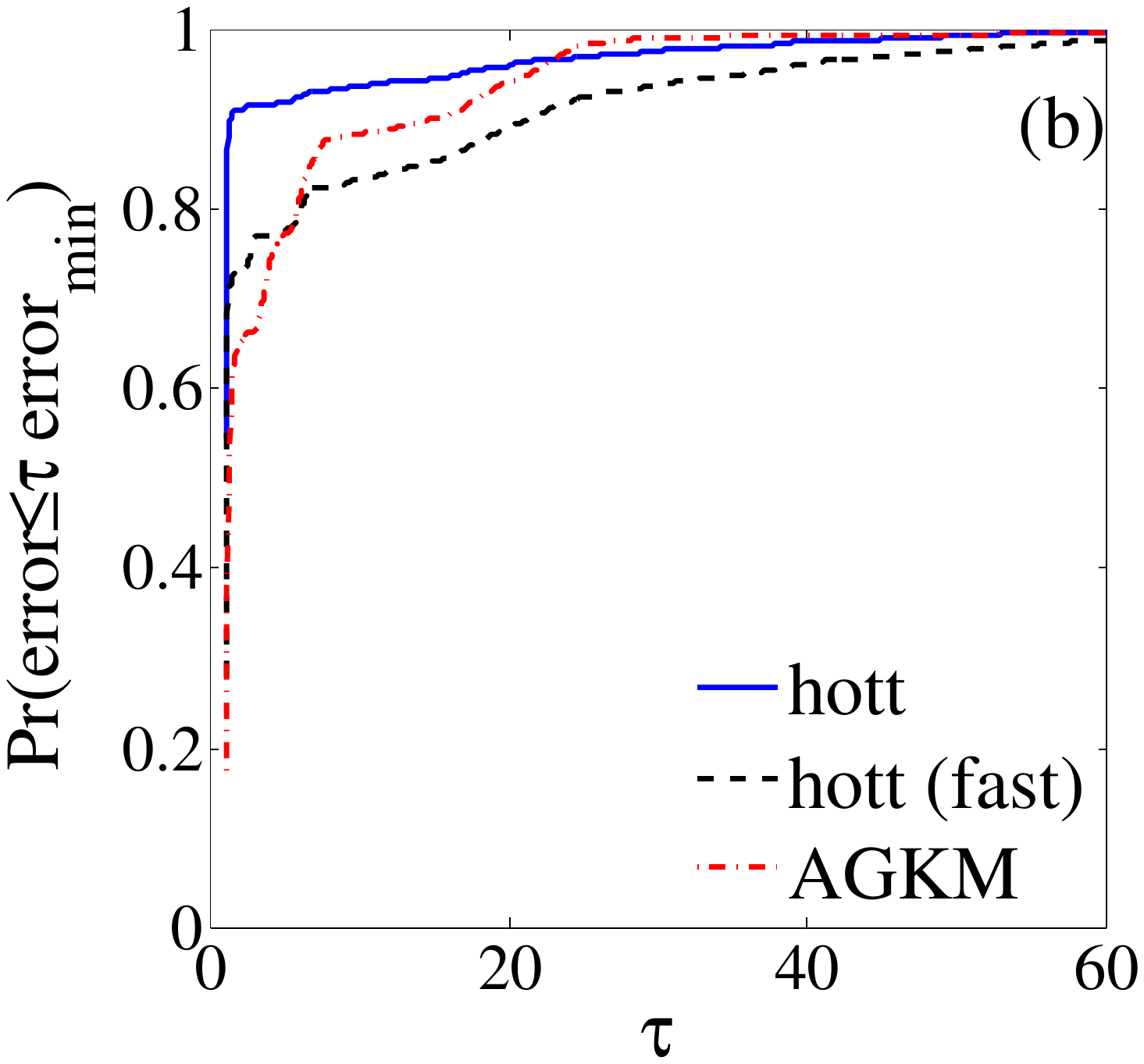} & 
\includegraphics[width=.3\textwidth]{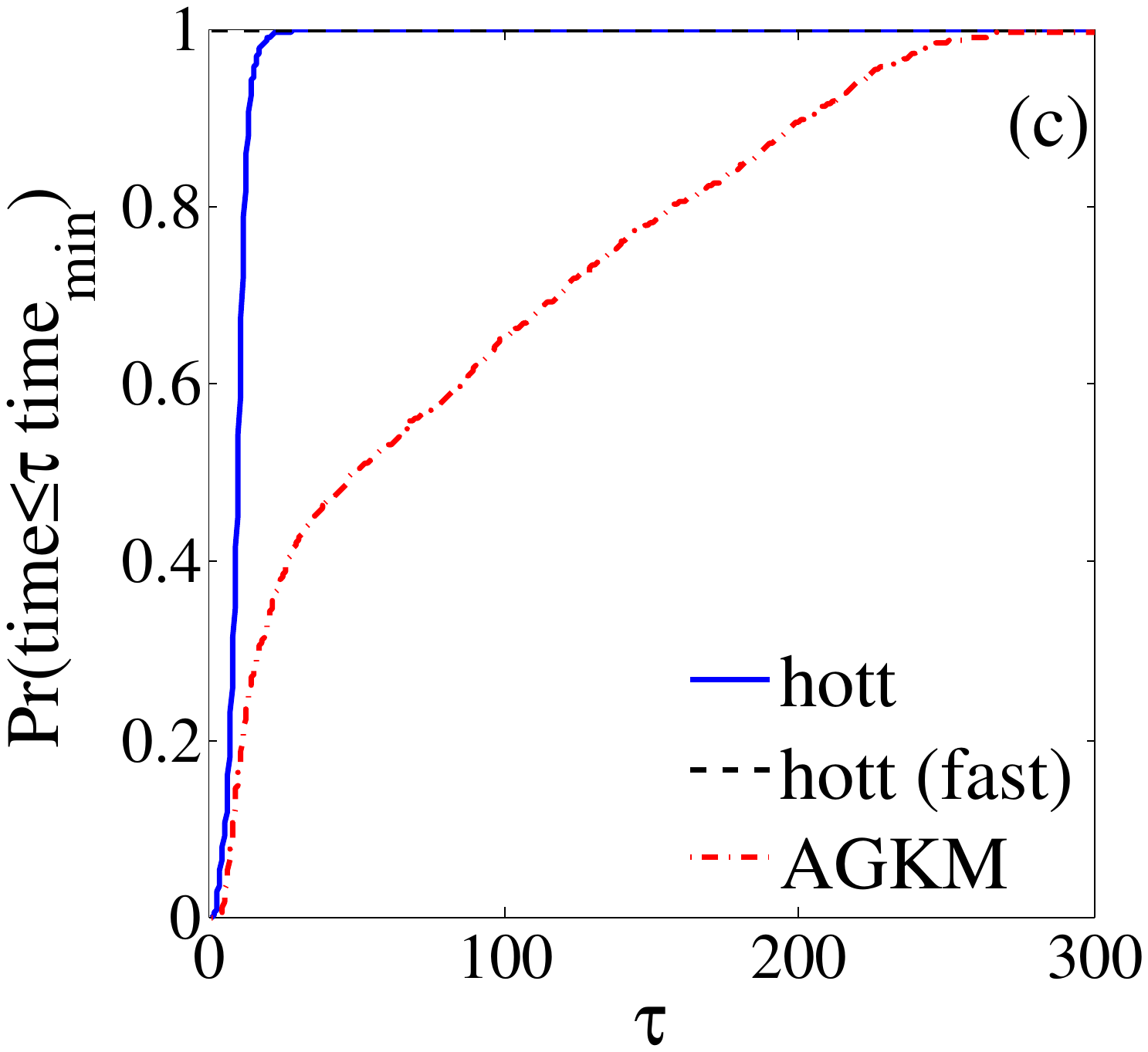}\\
\includegraphics[width=.3\textwidth]{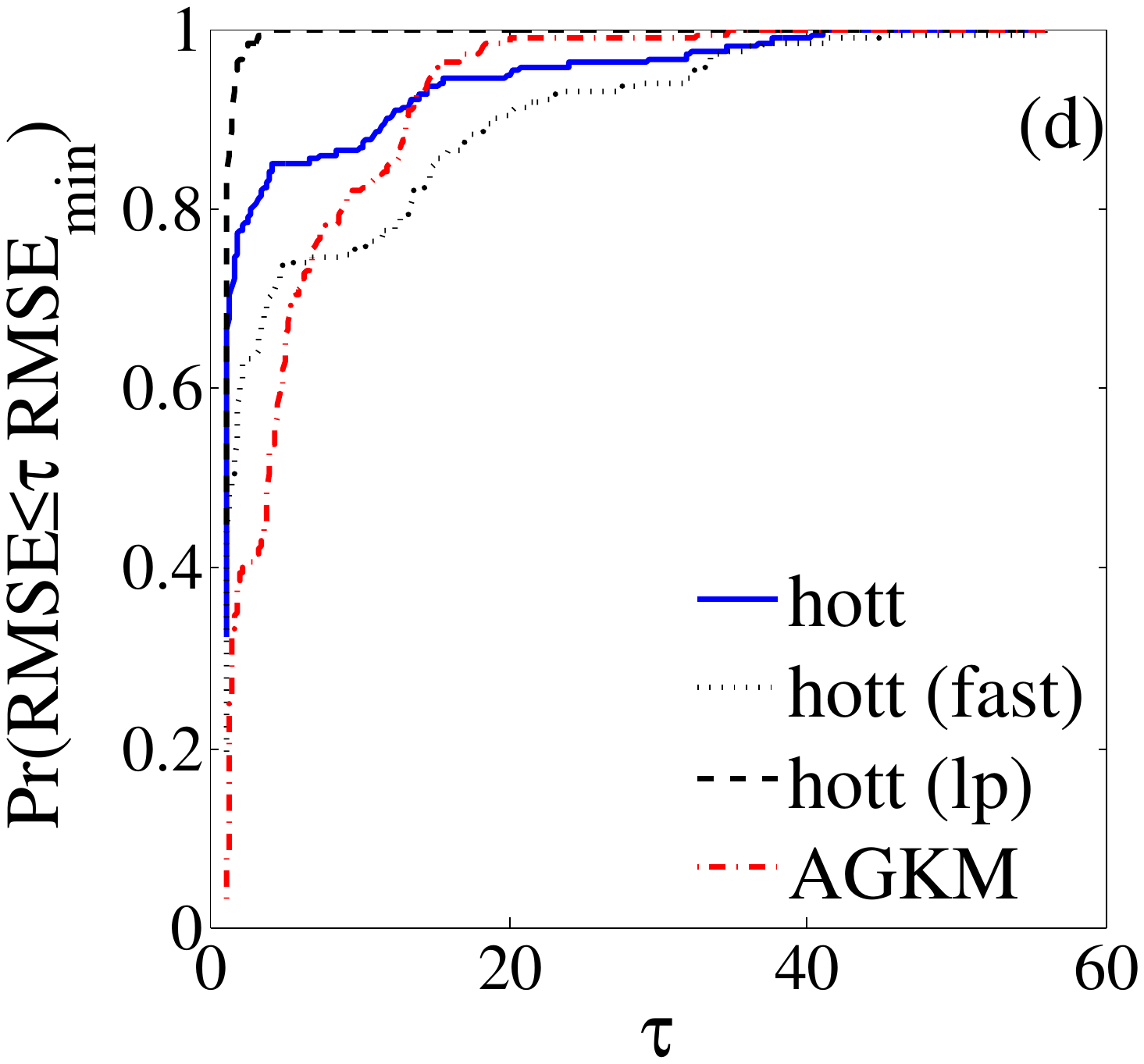} & 
\includegraphics[width=.3\textwidth]{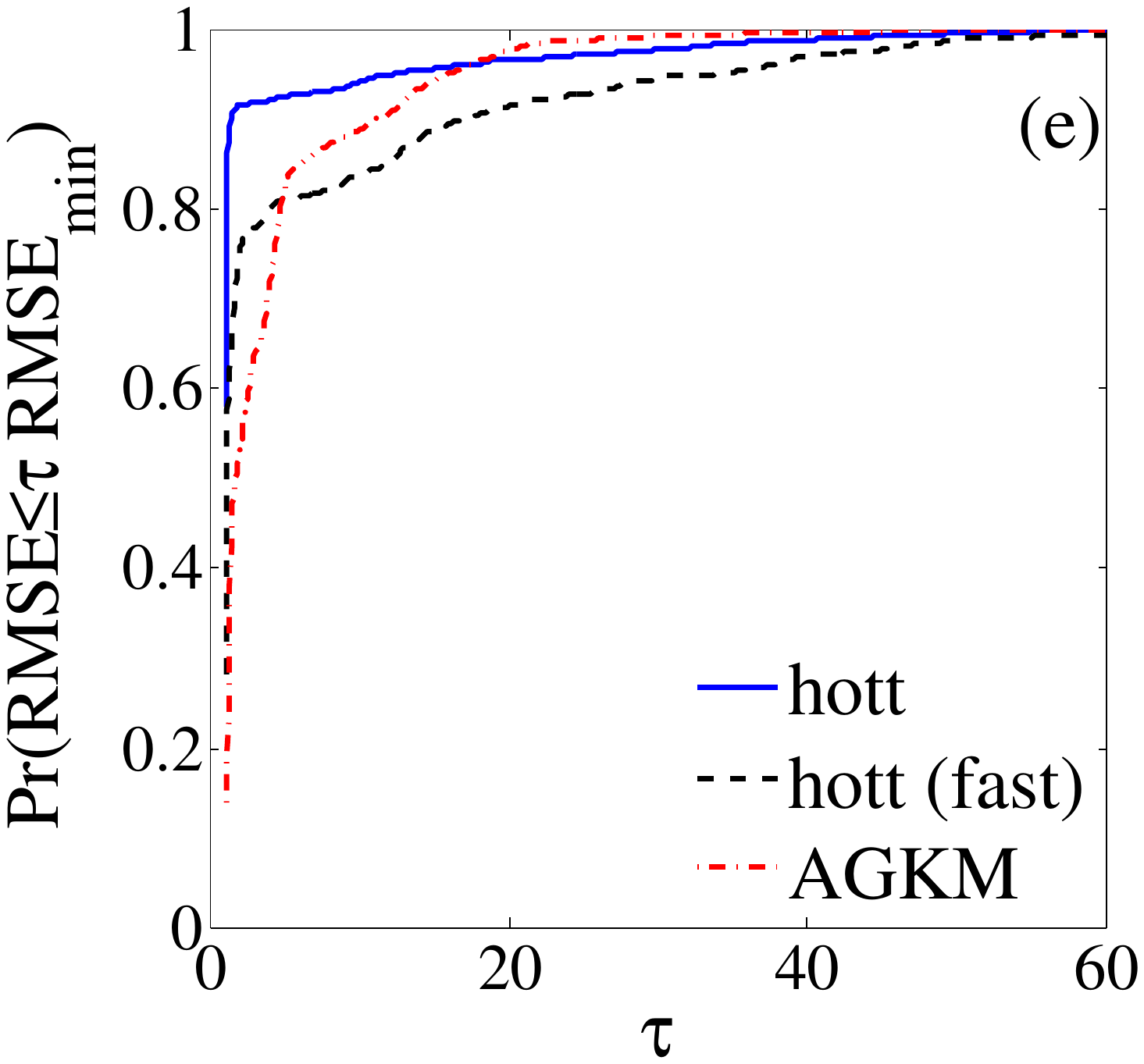} &
\includegraphics[width=.3\textwidth]{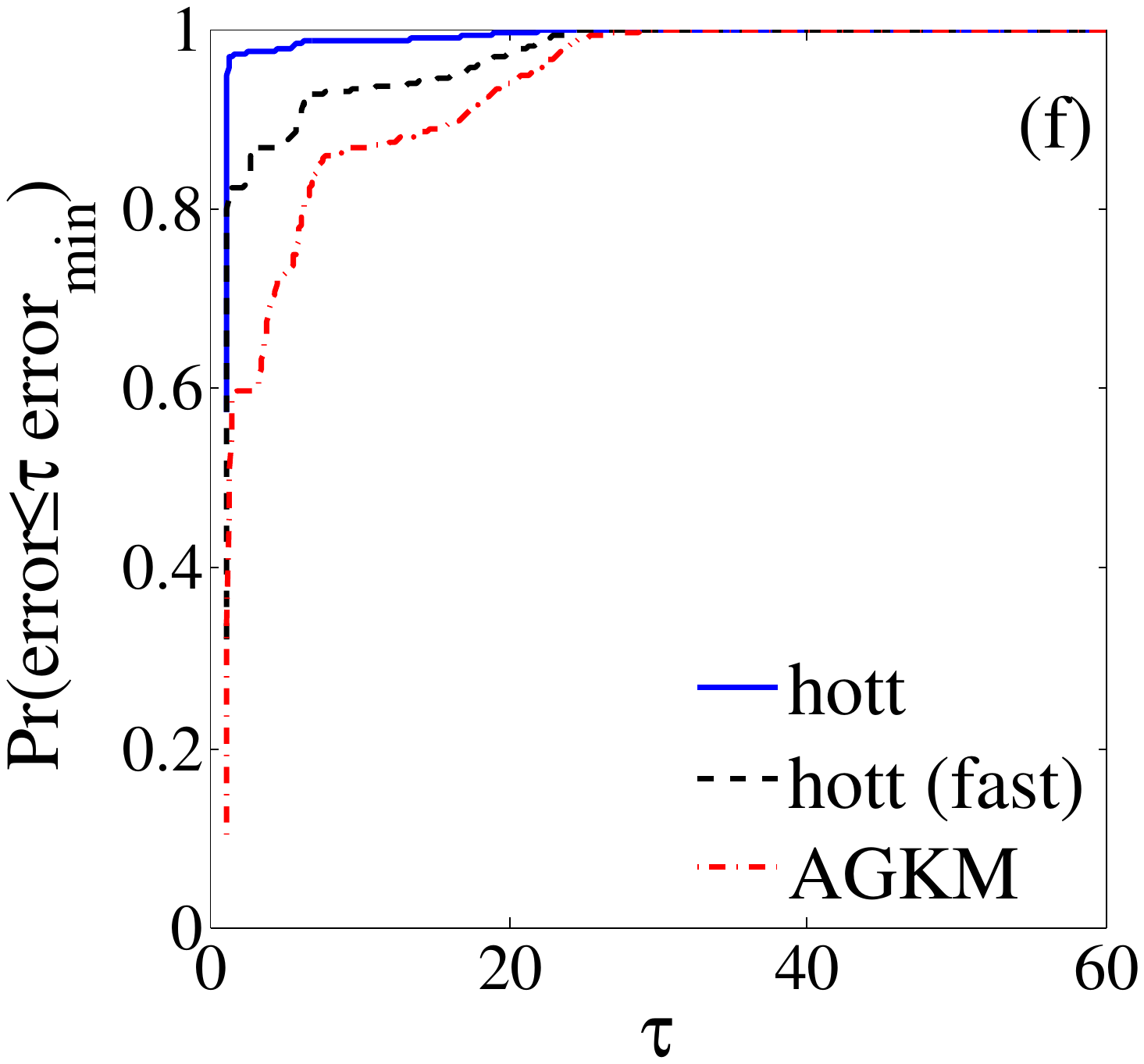}  \\
\end{tabular}
\caption{\small Performance profiles for synthetic data.  (a) $(\infty,1)$-norm error for  $40\times 400$ sized instances and (b) all instances. (c) is the performance profile for running time on all instances. RMSE performance profiles for the (d) small scale and (e) medium scale experiments.   (f) $(\infty,1)$-norm error for the $\eta\geq 1$.  In the noisy examples, even 4 epochs of \name is sufficient to obtain competitive reconstruction error.
\label{fig:perf-profiles-rmse}
\label{fig:perf-profiles}}
\end{figure}

Our performance profiles are shown in Figure~\ref{fig:perf-profiles}.  The first two figures correspond to experiments with $f=40$ and $n=400$.  The third figure is for the synthetic experiments with all other values of $f$ and $n$.  In terms of $(\infty,1)$-norm error, the linear programming solver typically achieves the lowest error.  However, using SDPT3, it is prohibitively slow to factor larger matrices.  On the other hand, \name achieves better noise performance than the AGKM algorithm in much less time.  Moreover, the AGKM algorithm must be fed the values of $\epsilon$ and $\alpha$ in order to run.  \name does not require this information and still achieves about the same error performance.

We also display a graph for running only four epochs (hott (fast)).  This algorithm is by far the fastest algorithm, but does not achieve as optimal a noise performance.  For very high levels of noise, however, it achieves a lower reconstruction error than the AGKM algorithm, whose performance degrades once $\eta$ approaches or exceeds $1$ (Figure~\ref{fig:perf-profiles-rmse}(f)).   We also provide performance profiles for the root-mean-square error of the nonnegative matrix factorizations (Figure~\ref{fig:perf-profiles-rmse} (d) and (e)).  The performance is qualitatively similar to that for the $(\infty,1)$-norm.

We also coded \name in C++, using the design principles described in Section~\ref{sec:alg-ls}, and ran on three large data sets.  We generated a large synthetic example (jumbo) as above with $r=100$.  We generated a co-occurrence matrix of people and places from the ClueWeb09 Dataset~\cite{clueweb}, normalized by TFIDF.  We also used \name to select features from the RCV1 data set to recognize the class CCAT~\cite{Lewis04}. The statistics for these data sets can be found in Table~\ref{fig:table-compare}. 

\begin{table}
\small
\begin{center}
\begin{tabular}{|c|rrr|r|r|}
\hline
data set & features & documents & nonzeros & size (GB) & time (s) \\
\hline
jumbo   & 1600 & 64000 & 1.02e8 & 2.7& 338\\
clueweb   & 44739 & 351849 & 1.94e7 & 0.27 & 478\\
RCV1 & 47153 & 781265 & 5.92e7 & 1.14 & 430\\
\hline
\end{tabular}
\caption{\small Description of the large data sets. Time is to find 100 hott topics on the 12 core machines.}\label{fig:table-compare}
\end{center}
\end{table}

In Figure~\ref{fig:speedup} (left), we plot the speed-up over a serial implementation. In contrast to other parallel methods that exhibit memory contention~\cite{Hogwild}, we see superlinear speed-ups for up to 20 threads due to hardware prefetching and cache effects.  All three of our large data sets can be trained in minutes, showing that we can scale \name on both synthetic and real data.  Our algorithm is able to correctly identify the hott topics on the jumbo set. For clueweb, we plot the RMSE Figure~\ref{fig:speedup} (middle).   This curve rolls off quickly for the first few hundred topics, demonstrating that our algorithm may be useful for dimensionality reduction in Natural Language Processing applications. For RCV1, we trained an SVM on the set of features extracted by \name and  plot the misclassification error versus the number of topics in Figure~\ref{fig:speedup} (right). With $1500$ hott topics, we achieve $7\%$ misclassification error as compared to $5.5\%$ with the entire set of features.

\begin{figure}
\centering
\begin{tabular}{ccc}
\includegraphics[width=.3\textwidth]{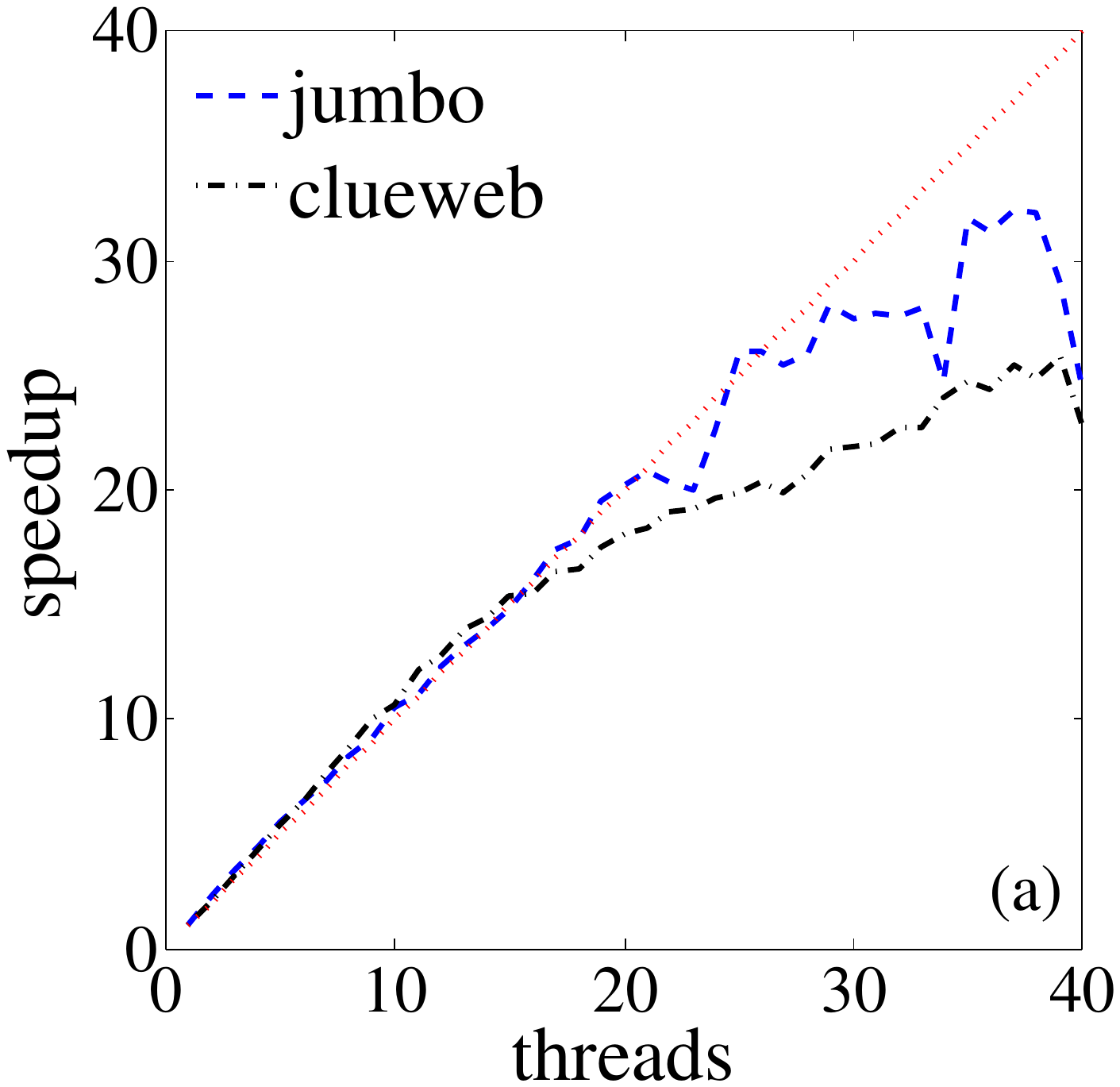} & 
\includegraphics[width=.3\textwidth]{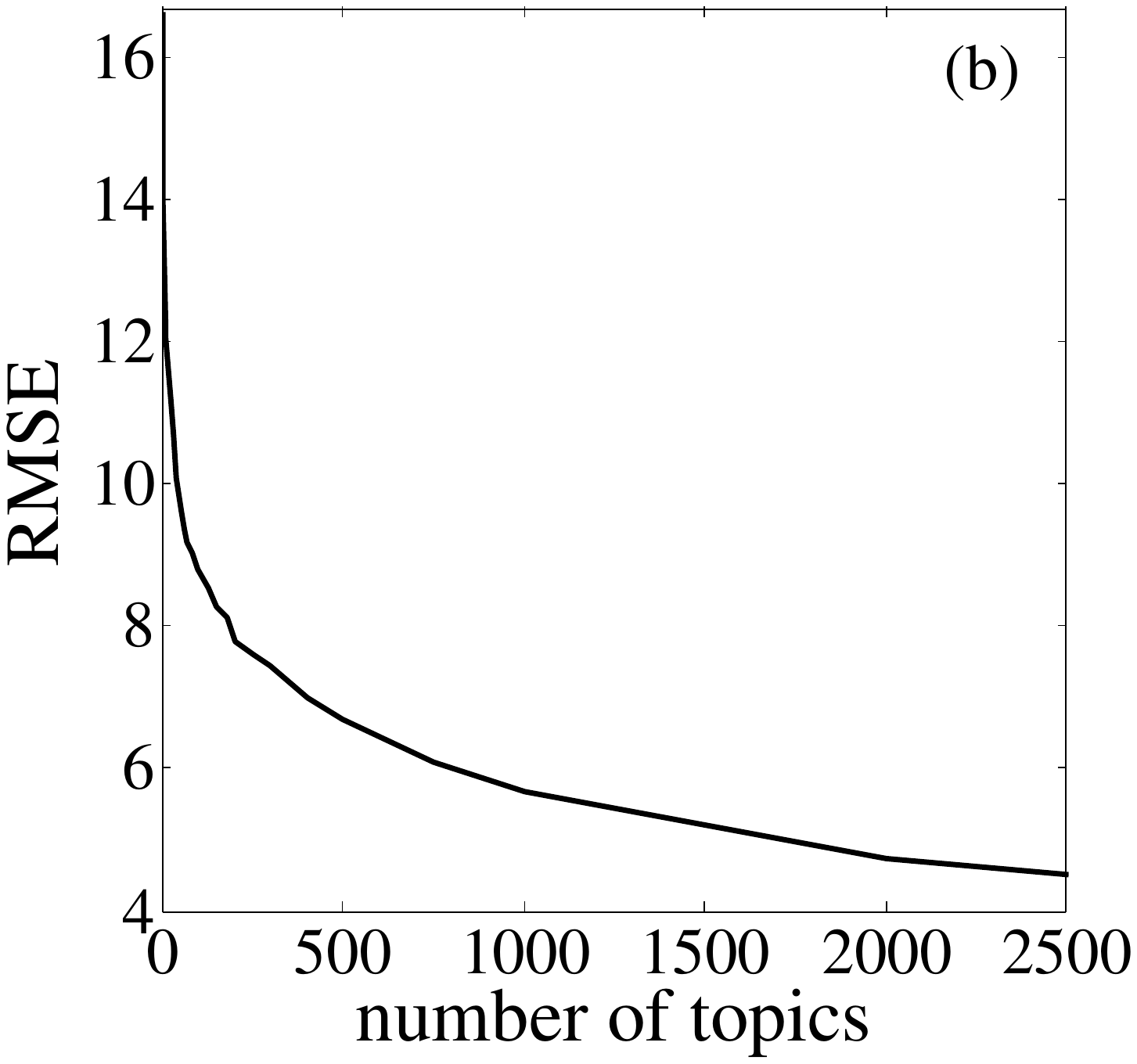} & 
\includegraphics[width=.3\textwidth]{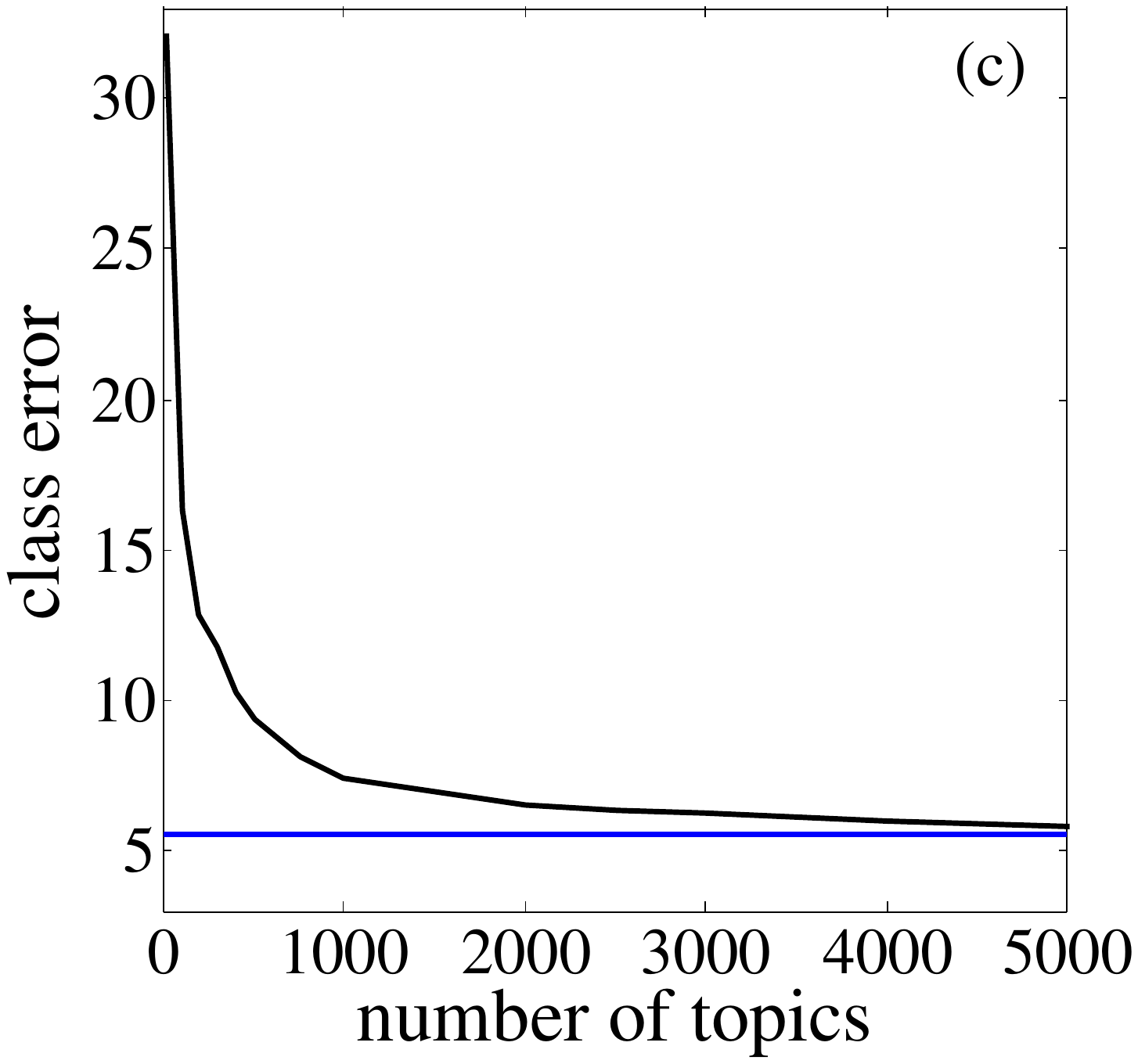}
\end{tabular}
\caption{\small (left) The speedup over a serial implementation for \name on the jumbo and clueweb data sets.  Note the superlinear speedup for up to 20 threads.  (middle)  The RMSE for the clueweb data set. (right) The test error on RCV1 CCAT class versus the number of hott topics.  The horizontal line indicates the test error achieved using all of the features.
\label{fig:speedup}}
\end{figure}

\section{Discussion}

This paper provides an algorithmic and theoretical framework for analyzing and deploying any factorization problem that can be posed as a linear (or convex) factorization localizing program.  Future work should investigate the applicability of \name to other factorization localizing algorithms, such as subspace clustering, and should revisit earlier theoretical bounds on such prior art.

\subsubsection*{Acknowledgments}
The authors would like to thank Sanjeev Arora, Michael Ferris, Rong Ge, Nicolas Gillis, Ankur Moitra, and Stephen Wright for helpful suggestions.  BR is generously supported by ONR award N00014-11-1-0723, NSF award CCF-1139953, and a Sloan Research Fellowship.  CR is  generously supported by NSF CAREER award under IIS-1054009, ONR award N000141210041, and gifts or research awards from American Family Insurance, Google, Greenplum, and Oracle. JAT is generously supported by ONR award N00014-11-1002, AFOSR award FA9550-09-1-0643, and a Sloan Research Fellowship.
{\small
\bibliographystyle{abbrv}
\bibliography{hott}
}

\newpage

\appendix

\section{Proofs}

Let $\mtx{Y}$ be a nonnegative matrix whose rows sum to one.
Assume that $\mtx{Y}$ admits an exact separable factorization
of rank $r$.  In other words, we can write $\mtx{Y} = \mtx{FW}$ where the rows
of $\mtx{W}$ are $\alpha$-robust simplicial and 
$$
\mtx{\Pi} \bm{F} = \begin{bmatrix} \mathbf{I}_r \\ \mtx{M} \end{bmatrix}
$$
for some permutation $\mtx{\Pi}$.
Let $I$ denote the indices of the rows in $\mtx{Y}$ that correspond with the
identity matrix in the factorization, which we have called the \emph{hott rows}.
Then we can write each row $j$ that is not hott as a convex combination
of the hott rows:
$$
\bm{Y}_{j\cdot} = \sum_{k\in I} M_{jk} \bm{Y}_{k\cdot}
\quad\text{for each $j \notin I$.}
$$
As we have discussed, we may assume that $\sum_{k} M_{jk} = 1$
for each $j \notin I$ because each row of $\mtx{Y}$ sums to one.

The first lemma offers a stronger bound on the coefficients
$M_{jk}$ in terms of the distance between row $j$ and the
hott rows.

\begin{lemma}\label{lemma:M-bound}
For an index $\ell$, suppose that the row $\mtx{Y}_{\ell\cdot}$ has distance greater than $\delta$ from a hott topic $\mtx{Y}_{i \cdot}$ with $i \in I$.  Then  $M_{\ell i} \leq 1-\delta/2$.
\end{lemma}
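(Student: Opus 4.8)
The plan is to exploit the convexity structure of the separable factorization directly, collapsing all but one term of the convex combination into a single probability vector. Since row $\ell$ is not hott, we may write $\mtx{Y}_{\ell\cdot} = \sum_{k\in I} M_{\ell k}\mtx{Y}_{k\cdot}$ with $M_{\ell k}\geq 0$ and $\sum_{k\in I}M_{\ell k}=1$. First I would isolate the contribution of the hott row $i$:
\[
\mtx{Y}_{\ell\cdot} - \mtx{Y}_{i\cdot} = (M_{\ell i}-1)\,\mtx{Y}_{i\cdot} + \sum_{k\in I,\ k\neq i} M_{\ell k}\,\mtx{Y}_{k\cdot}.
\]
Note $M_{\ell i}<1$, since $M_{\ell i}=1$ would force $\mtx{Y}_{\ell\cdot}=\mtx{Y}_{i\cdot}$, contradicting the hypothesis that their $\ell_1$ distance exceeds $\delta$. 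Hence $\sum_{k\neq i}M_{\ell k}=1-M_{\ell i}>0$, and the vector $\vct{z} := (1-M_{\ell i})^{-1}\sum_{k\in I,\ k\neq i} M_{\ell k}\mtx{Y}_{k\cdot}$ is a genuine convex combination of the remaining hott rows. Substituting gives the clean identity
\[
\mtx{Y}_{\ell\cdot} - \mtx{Y}_{i\cdot} = (1-M_{\ell i})\,(\vct{z} - \mtx{Y}_{i\cdot}).
\]

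Next I would bound $\|\vct{z}-\mtx{Y}_{i\cdot}\|_1$ from above. Both $\vct{z}$ and $\mtx{Y}_{i\cdot}$ are nonnegative and have unit $\ell_1$ norm --- the latter by the standing normalization of $\mtx{Y}$, the former because it is a convex combination of rows of $\mtx{Y}$, each of which sums to one --- so the triangle inequality yields $\|\vct{z}-\mtx{Y}_{i\cdot}\|_1 \leq \|\vct{z}\|_1 + \|\mtx{Y}_{i\cdot}\|_1 = 2$. Combining this with the displayed identity and the distance hypothesis,
\[
\delta < \|\mtx{Y}_{\ell\cdot}-\mtx{Y}_{i\cdot}\|_1 = (1-M_{\ell i})\,\|\vct{z}-\mtx{Y}_{i\cdot}\|_1 \leq 2(1-M_{\ell i}),
\]
which rearranges to $M_{\ell i} < 1-\delta/2$, and in particular $M_{\ell i}\leq 1-\delta/2$.

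I expect essentially no obstacle here. The one idea that makes the argument work is the observation that the tail $\sum_{k\neq i} M_{\ell k}\mtx{Y}_{k\cdot}$ can be renormalized into a single point $\vct{z}$ on the unit $\ell_1$ sphere; once that is in hand, the factor $2$ is immediate from the triangle inequality and the row normalization, and the rest is a one-line rearrangement. The only fussy point worth an explicit sentence is the degenerate case $M_{\ell i}=1$, which is ruled out by the strictness of the distance hypothesis.
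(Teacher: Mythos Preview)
Your proposal is correct and is essentially the same argument as the paper's: both split off the $i$th term from the convex combination, apply the triangle inequality together with the unit $\ell_1$ normalization of rows to produce the factor $2$, and rearrange. The only cosmetic difference is that you package the remaining terms into an auxiliary point $\vct{z}$ and handle the degenerate case $M_{\ell i}=1$ explicitly, whereas the paper applies the triangle inequality directly to the sum.
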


\begin{proof}
We can express the $\ell$th row as a convex combination of hott rows: $\mtx{Y}_{\ell \cdot} = \sum_{k\in I} M_{\ell k} \mtx{Y}_{k \cdot}$.  For each $i \in I$, we can bound $M_{\ell i}$ as follows.
\[
\begin{aligned}
	\delta \leq \|\mtx{Y}_{i \cdot} - \mtx{Y}_{\ell \cdot}\|_1 &= \left\| \mtx{Y}_{i \cdot} -  \sum_{k\in I} M_{\ell k} \mtx{Y}_{k \cdot} \right\|_1\\
	&= \left\| (1-M_{\ell i}) \mtx{Y}_i -  \sum_{k\in I \setminus \{i\} } M_{\ell k} \mtx{Y}_{k \cdot} \right\|_1\\
	&\leq \left\|(1-M_{\ell i})   \mtx{Y}_i \right\|_1 + \sum_{k\in I \setminus \{i\} } M_{\ell k} \left\| \mtx{Y}_{k \cdot} \right\|_1 \\
	&= 2 (1-M_{\ell i})\,.
\end{aligned}
\]
The inequality is the triangle inequality.  To reach the last line, we use the fact that each row of $\mtx{Y}$ has $\ell_1$ norm equal to one.  Furthermore, $\sum_{k \in I \setminus \{i\}} M_{\ell k} = 1 - M_{\ell i} \geq 0$ because each row of $\bm{M}$ consists of nonnegative numbers that sum to one.
Rearrange to complete the argument.
\end{proof}

The next lemma is the central tool in our proofs.  It tells us that any representation of a hott row has to involve rows that are close in $\ell_1$ norm to a hott row.  To state the result, we define for each hott row $i$
$$
\mathcal{B}_{\delta}(i) = \{ j : \Vert \vct{Y}_{i\cdot} - \mtx{Y}_{j\cdot} \Vert_1 \leq \delta \}.
$$
In other words, $\mathcal{B}_{\delta}(i)$ contains the indices of all rows with
$\ell_1$ distance no greater than $\delta$ from the hott topic $\mtx{Y}_{i\cdot}$.

\begin{lemma}\label{lemma:weight-bound}
Let $\vct{c} \in \R^f$ be a nonnegative vector whose entries sum to one.  For some hott row $i \in I$, suppose that $\|\vct{c}^T \mtx{Y} -\mtx{Y}_{i\cdot} \|_{1} \leq \tau$. Then
\begin{align}
\label{eq:weight-bound-a}	\sum_{j \in \mathcal{B}_\delta (i) } c_{j}
	&\geq 1 - \frac{2\tau}{\min\{\alpha \delta, \alpha^2 \}}.
\end{align}
\end{lemma}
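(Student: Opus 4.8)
The plan is to reduce the claim to a statement about how much mass $\vct c$ assigns near the hott topic $\mtx Y_{i\cdot}$. First I would use the separable structure to rewrite $\vct c^{T}\mtx Y$ as a convex combination of the hott rows. Since $\mtx Y_{j\cdot}=\sum_{k\in I}M_{jk}\mtx Y_{k\cdot}$ for $j\notin I$, while $\mtx Y_{j\cdot}$ is itself a hott row for $j\in I$, collecting terms gives
\[
\vct c^{T}\mtx Y=\sum_{k\in I}a_{k}\,\mtx Y_{k\cdot},\qquad a_{k}:=c_{k}+\sum_{j\notin I}c_{j}M_{jk}.
\]
Because $\vct c\geq\vct 0$ has entries summing to one and every row of $\mtx M$ is nonnegative with entries summing to one, the numbers $(a_{k})_{k\in I}$ are nonnegative and sum to one, so $\vct c^{T}\mtx Y$ lies in the convex hull of the hott rows. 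Now $\vct c^{T}\mtx Y-\mtx Y_{i\cdot}=\sum_{k\in I\setminus\{i\}}a_{k}\mtx Y_{k\cdot}-(1-a_{i})\mtx Y_{i\cdot}$, and when $a_{i}<1$ we may pull out the factor $1-a_{i}$ to exhibit a point $\vct v$ in the convex hull of $\{\mtx Y_{k\cdot}:k\in I\setminus\{i\}\}$ with $\vct c^{T}\mtx Y-\mtx Y_{i\cdot}=(1-a_{i})(\vct v-\mtx Y_{i\cdot})$. The hott rows are the rows of $\mtx W$ and hence $\alpha$-robust simplicial, so $\|\vct v-\mtx Y_{i\cdot}\|_{1}\geq\alpha$; together with the hypothesis $\|\vct c^{T}\mtx Y-\mtx Y_{i\cdot}\|_{1}\leq\tau$ this forces $(1-a_{i})\alpha\leq\tau$, i.e.\ $1-a_{i}\leq\tau/\alpha$ (the case $a_{i}=1$ being trivial).

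The second step is to bound $a_{i}$ from above by $S:=\sum_{j\in\mathcal B_{\delta}(i)}c_{j}$. Write $a_{i}=\sum_{j}c_{j}N_{ji}$, where $N_{ji}=M_{ji}$ for $j\notin I$, $N_{ii}=1$, and $N_{ji}=0$ for $j\in I\setminus\{i\}$. For every $j\notin\mathcal B_{\delta}(i)$ one has $N_{ji}\leq 1-\delta/2$: if $j\notin I$ this is exactly Lemma~\ref{lemma:M-bound} applied with $\ell=j$, since $\mtx Y_{j\cdot}$ then has $\ell_{1}$ distance strictly greater than $\delta$ from $\mtx Y_{i\cdot}$; and if $j\in I\setminus\{i\}$ then $N_{ji}=0\leq 1-\delta/2$, since we may assume $\delta\leq 2$ (otherwise $\mathcal B_{\delta}(i)=\{1,\dots,f\}$ and the bound is vacuous). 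Bounding $N_{ji}\leq 1$ on $\mathcal B_{\delta}(i)$,
\[
a_{i}\;\leq\;S+\bigl(1-\tfrac{\delta}{2}\bigr)(1-S)\;=\;1-\tfrac{\delta}{2}\,(1-S).
\]
Combining with $1-a_{i}\leq\tau/\alpha$ gives $\tfrac{\delta}{2}(1-S)\leq\tau/\alpha$, hence $S\geq 1-\tfrac{2\tau}{\alpha\delta}\geq 1-\tfrac{2\tau}{\min\{\alpha\delta,\alpha^{2}\}}$, which establishes \eq{weight-bound-a} (and is in fact slightly stronger).

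I expect the only real difficulty to be bookkeeping rather than any new idea: one has to check that the induced coefficients $(a_{k})$ genuinely form a probability vector, so that $\alpha$-robust simpliciality applies verbatim, and that the estimate $N_{ji}\leq 1-\delta/2$ holds for \emph{every} index outside the ball $\mathcal B_{\delta}(i)$, including the other hott rows and the degenerate regime $\delta>2$. Everything else is the triangle inequality together with Lemma~\ref{lemma:M-bound} and the definition of $\alpha$-robust simplicial.
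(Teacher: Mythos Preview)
Your argument is correct and in fact a bit cleaner than the paper's.  Both proofs rest on the same two ingredients---Lemma~\ref{lemma:M-bound} and the $\alpha$-robust simplicial property of the hott rows---but they are organized differently.  The paper first restricts to $\delta<\alpha$ and then runs a variational argument: it shows that, among all nonnegative probability vectors $\vct c$ with a fixed value of $\vct c^{T}\mtx Y$, the minimum of $w_i=\sum_{j\in\mathcal B_\delta(i)}c_j$ is attained at a $\vct c$ with $c_j=0$ for every $j\in\mathcal B_\delta(i)\setminus\{i\}$; only after this reduction does it unroll the convex combination and invoke simpliciality.  You bypass the minimization step entirely by collapsing $\vct c^{T}\mtx Y$ to the hott-row combination $\sum_{k\in I}a_k\mtx Y_{k\cdot}$ at the outset, reading off $1-a_i\leq\tau/\alpha$ from $\alpha$-robust simpliciality, and then bounding $a_i$ above via Lemma~\ref{lemma:M-bound}.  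This avoids both the contradiction argument and the case split on $\delta$ versus $\alpha$; as you note, it actually yields the uniform bound $S\geq 1-2\tau/(\alpha\delta)$ for all $\delta\leq 2$, which is slightly sharper than the paper's $1-2\tau/\min\{\alpha\delta,\alpha^2\}$ (the $\alpha^2$ in the paper is an artifact of first proving the result only for $\delta<\alpha$ and then extending by monotonicity).
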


\begin{proof}
Let us introduce notation for the quantity of interest: $w_i = w_i(\vct{c}) = \sum_{j \in \mathcal{B}_\delta (\mtx{X}_{i\cdot}) } c_{j}$.  We may assume that $w_i < 1$, or else the result holds trivially.
Since the entries of $\vct{c}$ sum to one, we have
$$
0 < 1 - w_i = \sum_j c_j - \sum_{j \in \mathcal{B}_{\delta}(i)} c_j
	= \sum_{j \notin \mathcal{B}_\delta (i) } c_{j}.
$$

Next, we introduce the extra assumption that $\delta < \alpha$.  It is clear that $w_i$ increases monotonically with $\delta$, so any lower bound on $w_i$ that we establish in this case extends to a bound that holds for larger $\delta$.  Since the hott topics are $\alpha$-robust simplicial, all the other hott topics are at least $\delta$ away from $\mtx{Y}_{i\cdot}$ in $\ell_1$ norm.  Therefore, the hott row $i$ is the unique hott row listed in $\mathcal{B}_{\delta}(i)$.

To establish the result, we may as well assume that $w_i(\vct{c})$ achieves its minimum possible value subject to the constraints that the value of $\vct{c}^T \mtx{Y}$ is fixed and that $\vct{c}$ is a nonnegative vector that sums to one.  We claim that this minimum such $w_i$ occurs if and only if $c_j = 0$ for all $j \in \mathcal{B}_{\delta}(i) \setminus \{i\}$.  We complete the proof under this additional surmise.

The assertion in the last paragraph follows from an argument by contradiction.  Suppose that $w_i(\vct{c})$ were minimized at a vector $\vct{c}$ where $c_{j} > 0$ for some $j \in \mathcal{B}_{\delta}(i) \setminus \{i\}$.  Then we can construct another set of coefficients $\tilde{\vct{c}}$ that satisfies the constraints and leads to a smaller value of $w_i$.  We have the representation $\mtx{Y}_{j\cdot} = \sum_{k \in I} M_{jk} \mtx{Y}_{k \cdot}$.  Set $\tilde{c}_{j} = 0$; set $\tilde{c}_k = c_k + c_{j} M_{jk}$ for each $k \in I$, and set $\tilde{c}_k = c_k$ for all remaining $k \notin I \cup \{j\}$.  It is easy to verify that $\tilde{\vct{c}}^T \mtx{Y} = \vct{c}^T \mtx{Y}$ and that $\tilde{\vct{c}}$ is a nonnegative vector whose entries sum to one.  But the value of $w_i$ is strictly smaller with the coefficients $\tilde{\vct{c}}$:
$$
w_i(\tilde{\vct{c}}) =
\sum_{k \in \mathcal{B}_{\delta} (i)} \tilde{c}_k < 
\sum_{k \in \mathcal{B}_{\delta} (i)} c_k
= w_i(\vct{c})
$$
In this relation, all the summands cancel, except for the one with index $j$.  But $\tilde{c}_j = 0 < c_j$.  It follows that the minimum value of $w_i$ cannot occur when $c_j > 0$.  Compactness of the constraint set assures us that there is some vector $\vct{c}$ of coefficients that minimizes $w_i(\vct{c})$, so we must conclude that the minimizer $\vct{c}$ satisfies $c_j = 0$ for $j \in \mathcal{B}_{\delta}(i) \setminus \{i\}$.

Let us continue. Owing to the assumption that $\mtx{Y}_{i\cdot}$ is no farther than $\tau$ from $\vct{c}^T \mtx{Y}$, we have
\begin{align}
\nonumber	\tau
	&\geq \left\| \mtx{Y}_{i\cdot} - \sum_{j} c_j \mtx{Y}_{j \cdot} \right\|_1 \\ \nonumber
	&= \left\|(1-w_i) \mtx{Y}_{i\cdot} - \sum_{j \notin \mathcal{B}_\delta (i) } c_j \mtx{Y}_{j \cdot} \right\|_1\\
\label{eq:tau-lower-bound}	& = (1-w_i) \left\| \mtx{Y}_{i\cdot} - \frac{1}{1-w_i}\sum_{j \notin \mathcal{B}_\delta (i) } \sum_{k \in I} c_j M_{jk} \mtx{Y}_{k \cdot} \right\|_1 \,.
\end{align}
The first line follows when we split the sum over $j$ based on whether or not the components fall in $\mathcal{B}_\delta(i)$.  Then we apply the property that $c_j = 0$ for $j \in \mathcal{B}_{\delta}(i) \setminus \{i\}$, and we identify the quantity $w_i$.  In the last line, we factored out $1 - w_i$, and we introduced the separable factorization of $\mtx{Y}$.

Next, for each $k \in I$, define
\[
	\pi_k :=  \frac{1}{1-w_i} \sum_{j \notin \mathcal{B}_\delta (i) } c_j M_{jk}\,,
\]
and note that $\pi_k\geq 0$.
Furthermore,
$$
\sum_{k \in I} \pi_k = \frac{1}{1-w_i} \sum_{j \notin \mathcal{B}_{\delta}(i)} c_{j} \sum_{k\in I} M_{jk}
	= \frac{1}{1-w_i} \sum_{j\notin \mathcal{B}_{\delta}(i)} c_{j}
	= 1
$$
because the rows of $\mtx{M}$ sum to one and because of the definition of $w_i$.  Lemma~\ref{lemma:M-bound} implies that $\pi_i$ satisfies the bound
\begin{equation} \label{eqn:pi-bound}
\pi_i = \frac{1}{1-w_i} \sum_{j \notin \mathcal{B}_{\delta}(i)} c_{j} M_{ji}
	\leq \frac{1 - \delta/2}{1 - w_i} \sum_{j \notin \mathcal{B}_{\delta}(i)} c_{j}
	= 1 - \delta/2.
\end{equation}
Indeed, the lemma is valid because $\mtx{Y}_{j\cdot}$ is at least a distance of $\delta$ away from $\bm{Y}_{i\cdot}$ for every $j \notin \mathcal{B}_{\delta}(i)$.

With these observations, we can continue our calculation from~\eqref{eq:tau-lower-bound}:
\begin{align*}
\tau &\geq (1 - w_i) \left\Vert \mtx{Y}_{i \cdot} - \sum_{k \in I} \pi_k \mtx{Y}_{k\cdot} \right\Vert_1 \\
	&= (1 - w_i) (1-\pi_i) \left\Vert \mtx{Y}_{i\cdot} - \sum_{k \in I \setminus\{i\}} \frac{\pi_k}{1-\pi_i} \mtx{Y}_{k\cdot} \right\Vert_1 \\
	&\geq (1 - w_i) (1-\pi_i) \alpha \\
	&\geq (1 - w_i) (\delta / 2) \alpha.
\end{align*}
The first identity follows when we combine the $i$th term in the sum with $\mtx{Y}_{i\cdot}$.  The inequality depends on the assumption that $\mtx{W}$ is $\alpha$-robust simplicial; any
convex combination of $\{\mtx{Y}_{k\cdot} : k \notin I\}$ is at least $\alpha$ away from $\mtx{Y}_{i\cdot}$
in $\ell_1$ norm.  Afterward, we use the bound~\eqref{eqn:pi-bound}.  Rearrange the final expression to complete the argument.
\end{proof}

\subsection{Proof of Theorem~\ref{prop:noiseless-case}}

This result is almost obvious when there are no duplicated rows.  Indeed, since the hott topics form a simplicial set and the matrix $\mtx{Y}$ admits a separable factorization, the only way we can represent all $r$ hott topics exactly is to have $C_{ii} = 1$ for every hott row $i$.  This exhausts the trace constraint, and we see that every other diagonal entry $C_{kk} = 0$ for every not hott row $k$.  The only matrices that are feasible identify the hott rows on the diagonal.  They must represent the remaining rows using linear combinations of the hott topics because of the constraints $\mtx{CY} = \mtx{Y}$ and $C_{ij} \leq C_{jj}$.  It follows that the only feasible matrices are factorization localizing matrices.

When there are duplicated rows, the analysis is slightly more delicate.  By the same argument as above, all the weight on the diagonal must be concentrated on hott rows.  But the objective $\vct{p}^T \diag(\mtx{C})$ ensures that, out of any set of duplicates of a given topic, we always pick the duplicate row $j$ where $p_j$ is smallest; otherwise, we could reduce the objective further.  Therefore, the diagonal of $\mtx{C}$ identifies all $r$ distinct hott topics, and we select each one duplicate of each topic.  As before, the other constraints ensure that the remaining rows are represented with this distinguished choice of hott topic exemplars.  Therefore, the only minimizers are factorization localizing matrices that identify each hott topic exactly once.

\subsection{Proof of Theorem~\ref{prop:noisy-case}}

Let $\mtx{X} = \mtx{Y} + \mtx{\Delta}$.  The matrix $\mtx{X}$ is the observed data, with rows scaled to have unit sum, and the perturbation matrix $\mtx{\Delta}$ satisfies $\ionorm{\mtx{\Delta}} \leq \epsilon$.  We assume that $\mtx{Y}$ is a nonnegative matrix whose rows sum to one, and we posit that it admits a rank-$r$ separable NMF $\mtx{Y} = \mtx{FW}$ where $\mtx{W}$ is $\alpha$-robust simplicial.  We write $I$ for the set of rows corresponding to hott topics in $\mtx{Y}$.

Suppose that $\mtx{C}_0$ is a factorization localizing matrix for the underlying matrix $\mtx{Y}$.  That is, $\mtx{C}_0 \mtx{Y} = \mtx{Y}$ and each row of $\mtx{C}_0$ sums to one.  It follows that
$$
\ionorm{ \mtx{C}_0 \mtx{\Delta} - \mtx{\Delta} } \leq (\ionorm{\mtx{C}_0} + \ionorm{\mathbf{I}}) \ionorm{ \mtx{\Delta} }
	\leq 2\epsilon.
$$
Using our decomposition $\mtx{X} = \mtx{Y} + \mtx{\Delta}$, we quickly verify that
\[
\ionorm{ \mtx{C}_0 \mtx{X} - \mtx{X}}
	\leq \ionorm{ \mtx{C}_0 \mtx{Y} - \mtx{Y} } + \ionorm{\mtx{C}_0 \mtx{\Delta} - \mtx{\Delta}}
	\leq 2 \epsilon.
\]
The point here is that a factorization localizing matrix for $\mtx{Y}$ serves as an approximate factorization localizing matrix for $\mtx{X}$.

Our approach for constructing an approximate factorization of $\mtx{X}$ requires us to minimize a cost function $\vct{t}^T \diag(\mtx{C})$ over the constraint set
\begin{equation}\label{eq:robust-feasibility-dup}
\Phi_{2\epsilon}(\mtx{X}) = \left\{ \mtx{C} \geq \mtx{0} :
	\ionorm{ \mtx{CX} - \mtx{X} } \leq 2\epsilon, \trace(\mtx{C}) = r, C_{jj} \leq 1 \ \forall j,~C_{ij}\leq C_{jj}\ \forall i, j \right\}.
\end{equation}
Note that the factorization localizing matrix $\mtx{C}_0$ for $\mtx{Y}$ is a member of this set, so the optimization problem we solve in Theorem~\ref{prop:noisy-case} is feasible.

Suppose that $\mtx{C} \in \Phi_{2\epsilon}(\mtx{X})$ is arbitrary.  Let us check that the row sums of $\mtx{C}$ are not much larger than one.  To that end, note that
$$
\mtx{C}\vct{1} = \mtx{CX}\vct{1} = \mtx{X} \vct{1} + (\mtx{CX} - \mtx{X}) \vct{1}
	= \vct{1} + (\mtx{CX} - \mtx{X}) \vct{1}.
$$
We have twice used the fact that every row of $\mtx{X}$ sums to one.
For any row $\vct{c}$ of the matrix $\mtx{C}$, this formula yields $\vct{c}^T \vct{1} \leq 1 + 2\epsilon$ since $\ionorm{\mtx{CX} - \mtx{X}} \leq 2\epsilon$.   As a consequence,
$$
\ionorm{ \mtx{C\Delta} - \mtx{\Delta} } \leq (\ionorm{ \mtx{C} } + \ionorm{\mathbf{I}} )
	\ionorm{\mtx{\Delta}} \leq (1 + 2\epsilon + 1) \epsilon = 2\epsilon + 2\epsilon^2.
$$
We may conclude that
\[
\ionorm{ \mtx{C} \mtx{Y} - \mtx{Y}}
	\leq \ionorm{ \mtx{CX} - \mtx{X} } + \ionorm{\mtx{C} \mtx{\Delta} - \mtx{\Delta}}
	\leq 4 \epsilon + 2\epsilon^2.
\]

The margin assumption states that $\| \vct{Y}_{\ell \cdot} - \vct{Y}_{i \cdot} \| > d_0$ for every hott topic $i \in I$ and every row $\ell \notin I$.  For any $i \in I$, Lemma~\ref{lemma:weight-bound} ensures that any approximate representation $\vct{c}^T \mtx{Y}$ of the $i$th row $\mtx{Y}_{i\cdot}$ with error at most $4\epsilon + 2\epsilon^2$ satisfies
$$
c_i = \sum_{j \in \mathcal{B}_{d_0}(i)} c_j \geq 1 - \frac{8\epsilon + 4 \epsilon^2}{\min\{\alpha d_0, \alpha^2\}}.
$$
In particular, every matrix $\mtx{C}$ in the set $\Phi_{2\epsilon}(\mtx{X})$ has $C_{ii} \geq 1 - (8\epsilon + 4\epsilon^2)/\min\{\alpha d_0, \alpha^2\}$ for each hott topic $i$.  To ensure that hott topic $i$ has weight $C_{ii}$ greater than $1 - 1/(r+1)$ for each $i$, we need
$$
\epsilon < \sqrt{1 + \frac{\min\{\alpha d_0, \alpha^2\}}{4(r+1)}} - 1
	< \frac{\min\{\alpha d_0, \alpha^2\}}{9(r+1)}
$$
Since there are $r$ hott rows, they carry total weight greater than $r(1 - 1/(r+1))$.  Given the trace constraint, that leaves less than $1 - 1/(r+1)$ for the remaining rows.  We see that each of the $r$ hott rows must carry more weight than every row that is not hott, so we can easily identify them.

Once we have identified the set $I$ of hott topics, we simply solve the second linear program
\begin{equation}\label{eq:debiasing}
	\minimize_{\mtx{B}} \ionorm{\mtx{X} -  \left[\begin{array}{c} \mathbf{I}\\ \mtx{B} \end{array} 
	\right]\mtx{X}_I}
\end{equation}
to find a $2\epsilon$-accurate factorization.

\section{Projection onto $\Phi_0$}\label{sec:algorithmic-details}

To project onto the set $\Phi_0$, note that we can compute the projection one column at a time.  Moreover, the projection for each individual column amounts to (after permuting the entries of the column),
\[
\{\vct{x}\in\R^f~:~ 0\leq x_i \leq x_1  ~\forall i\,, x_1\leq 1\}\,.
\]
Assume, again without loss of generality, that we want to project a vector $\vct{z}$ with $z_2 \geq z_3 \geq \ldots \geq  z_n$.  Then we need to solve the quadratic program
\begin{equation}\label{eq:squishme}
\begin{array}{ll}
	\minimize & \tfrac{1}{2} \|\vct{z}-\vct{x}\|^2\\
	\st &  0\leq x_i \leq x_1  ~\forall i\,, x_1\leq 1
	\end{array}
\end{equation}
The optimal solution can be found as follows. Let $k_c$ be the largest $k \in \{2,\ldots, f\}$ such that 
\[
z_{k_c+1} \leq \Pi_{[0,1]}\left(\sum_{k=1}^{k_c}  z_k \right)=:\mu
\]
where $\Pi_{[0,1]}$ denotes the projection onto the interval $[0,1]$.  Set
\[
	\hat{x}_i = \begin{cases} \mu & i\leq k_c\\
		(z_i)_+ & i >k_c
	\end{cases}\,.
\]
Then $\hat{\vct{x}}$ is the optimal solution.  A linear time algorithm for computing $\hat{x}$ is given by Algorithm~\ref{alg:colsquish}

To prove that $\hat{\vct{x}}$ is optimal, define 
\[
	y_i = \begin{cases} 
		z_i - \mu & i \leq k_c\\
		\min(z_i,0) & i > k_c
	\end{cases}\,.
\]
$y_i$ is the gradient of $\tfrac{1}{2}\|\vct{x}-\vct{z}\|^2$ at $\hat{x}$.  Consider the LP
\[
\begin{array}{ll}
	\minimize & -\vct{y}^T \vct{x}\\
	\st &  0\leq x_i \leq x_1  ~\forall i\,, x_1\leq 1
	\end{array}\,.
\]
$\hat{\vct{x}}$ is an optimal solution for this LP because the cost is negative on the negative entries, $0$ on the nonnegative entries that are 
larger than $k_c$, positive for $2\leq k \leq k_c$,  and nonpositive for $k=1$. Hence, by the minimum principle, $\hat{\vct{x}}$ is also a solution of~\eq{squishme}.

\newpage

\begin{algorithm}[t]
  \caption{Column Squishing}
   \begin{algorithmic}[1]
  \REQUIRE A vector $\vct{z} \in \R^f$ with $z_2 \geq z_3 \geq \ldots \geq  z_n$.  
  \ENSURE The projection of $\vct{z}$ onto $\{\vct{x}\in\R^f~:~ 0\leq x_i \leq x_1  ~\forall i\,, x_1\leq 1\}$.
\STATE $\mu \leftarrow z_1$.
 \FOR{$k=2,\ldots,f$}
	\STATE {\bf if} $z_{k} \leq \Pi_{[0,1]}(\mu)$, Set $k_c=k-1$ and {\bf break}
  	\STATE {\bf else} set $\mu = \frac{k-1}{k}\mu + \tfrac{1}{k}z_k$.
\ENDFOR
\STATE $x_1 \leftarrow  \Pi_{[0,1]}(\mu)$ 
\STATE {\bf for} $k=2,\ldots, k_c$ {\bf set} $x_k =  \Pi_{[0,1]}(\mu)$.
\STATE {\bf for} $k=(k_c+1),\ldots, f$ {\bf set} $x_k =  (z_i)_+$.
\RETURN $\vct{x}$
\end{algorithmic} \label{alg:colsquish}
\end{algorithm}

\phantom{yada yada}

\end{document}